\newtheorem{theorem}{Theorem}[section]
\newtheorem{lemma}[theorem]{Lemma}
\newtheorem{proposition}[theorem]{Proposition}
\newtheorem{corollary}[theorem]{Corollary}
\theoremstyle{definition}
\newtheorem{definition}[theorem]{Definition}
\newtheorem{example}[theorem]{Example}
\theoremstyle{remark}
\newtheorem{remark}[theorem]{Remark}
\numberwithin{equation}{section}
\newcommand{\N}{\mathbb{N}}
\DeclareMathOperator{\diam}{diam\,}
\DeclareMathOperator{\co}{co}
\newcommand{\nn}[1]{{\left\vert\kern-0.25ex\left\vert\kern-0.25ex\left\vert #1 
		\right\vert\kern-0.25ex\right\vert\kern-0.25ex\right\vert}}
\renewcommand{\geq}{\geqslant}
\renewcommand{\leq}{\leqslant}
\newcommand{\norm}[1]{\left\Vert#1\right\Vert}
\newcommand{\Lip}{{\mathrm{Lip}}_0}
\newcommand{\justLip}{{\mathrm{Lip}}}
\newcommand{\dent}[1]{\operatorname{dent}\left(#1\right)}
\newcommand{\eps}{\varepsilon}
\begin{document}

\title{Daugavet points and $\Delta$-points in Lipschitz-free spaces}

\author[Jung]{Mingu Jung}
\address[Jung]{Department of Mathematics, POSTECH, Pohang 790-784, Republic of Korea \newline
\href{http://orcid.org/0000-0000-0000-0000}{ORCID: \texttt{0000-0003-2240-2855} }}
\email{\texttt{jmingoo@postech.ac.kr}}

\author[Rueda Zoca]{Abraham Rueda Zoca}
\address[Rueda Zoca]{Universidad de Murcia, Departamento de Matem\'aticas, Campus de Espinardo 30100 Murcia, (Spain)
 \newline
\href{https://orcid.org/0000-0003-0718-1353}{ORCID: \texttt{0000-0003-0718-1353} }}
\email{\texttt{abraham.rueda@um.es}}
\urladdr{\url{https://arzenglish.wordpress.com}}

\begin{abstract} 
We study Daugavet points and $\Delta$-points in Lipschitz-free Banach spaces. We prove that, if $M$ is a compact metric space, then $\mu\in S_{\mathcal F(M)}$ is a Daugavet point if, and only if, there is no denting point of $B_{\mathcal F(M)}$ at distance strictly smaller than two from $\mu$. Moreover, we prove that if $x$ and $y$ are connectable by rectifiable curves of length as close to $d(x,y)$ as we wish, then the molecule $m_{x,y}$ is a $\Delta$-point. Some conditions on $M$ which guarantee that the previous implication reverses are also obtained. As a consequence of our work, we show that Lipschitz-free spaces are natural examples of Banach spaces where we can guarantee the existence of $\Delta$-points which are not Daugavet points.
\end{abstract}

\thanks{The first author was supported by the Basic Science Research Program through the National Research Foundation of Korea (NRF) funded by the Ministry of Education (NRF-2019R1A2C1003857). The second author The research of Abraham Rueda Zoca was supported by Juan de la Cierva-Formaci\'on fellowship FJC2019-039973, by MTM2017-86182-P (Governmentof Spain, AEI/FEDER, EU), by MICINN (Spain) Grant PGC2018-093794-B-I00 (MCIU, AEI, FEDER, UE), by Junta de Andaluc\'ia Grant A-FQM-484-UGR18 and by Junta de Andaluc\'ia Grant FQM-0185.}

\subjclass[2010]{Primary 46B04; Secondary 46B20 }
\keywords{Lipschitz-free spaces; Daugavet points, $\Delta$-points; length spaces}

\maketitle

\thispagestyle{plain}

\section{Introduction}
 
Let $X$ be a Banach space and $T:X\longrightarrow X$ be a bounded operator. We say that $T$ satisfies the \textit{Daugavet equation} if
\begin{equation}\label{ecuadauga}
\Vert T+I\Vert=1+\Vert T\Vert,
\end{equation}
where $I$ denotes the identity operator. The study of Daugavet equation has been widely studied in the literature \cite{kkw,kssw,rtv} and references therein), even in a more general context, where $T$ is not necessarily a bounded operator (see e.g. \cite{cgmm,kmmw,sw}).

One of the most famous properties related to Daugavet equation, which precisely justifies its nomenclature, is the Daugavet property. Recall that a Banach space $X$ has the \textit{Daugavet property} if every rank-one operator satisfies the Daugavet equation. This property comes from the paper \cite{dau}, where it is proved that $C([0,1])$ enjoys this property. Since then, a lot of examples of Banach spaces enjoying the Daugavet property have appeared such as $\mathcal C(K)$ for a compact Hausdorff and perfect topological space $K$, $L_1(\mu)$ and $L_\infty(\mu)$ for a non-atomic measure $\mu$ or the space of Lipschitz functions $\Lip(M)$ over a metrically convex space $M$ (see \cite{ikw,kssw,werner} and the references therein for details). 

In \cite{ik2004} the following weaker property is considered: a Banach space $X$ is said to be a space with \textit{bad projections} if $\Vert I-P\Vert\geq 2$ holds for every rank-one projection $P:X\longrightarrow X$. This property was rediscovered later with the name of \textit{LD2P+} \cite{ahntt} and with the one of \textit{diametral local diameter two property} \cite{blrjca}.

One of the reasons why the above properties have attracted attention is because they have strong connections with the geometry of the unit ball of a Banach space. To be more precise, let us recall the following two results.

\begin{theorem}\label{theo:carad2pslices}\cite[Theorem 2.1]{kssw}
Let $X$ be a Banach space. The following assertions are equivalent:
\begin{enumerate}
\item $X$ has the Daugavet property.
\item For every $x\in S_X$, every $\varepsilon>0$ and every slice $S$ of $B_X$ there exists $y\in S$ such that
$$\Vert x-y\Vert>2-\varepsilon.$$
\end{enumerate}
\end{theorem}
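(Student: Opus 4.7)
The plan is to prove the two implications separately, using in each direction a carefully chosen rank-one operator built from the data of the slice. The central bookkeeping device is the identification of a norm-one rank-one operator with a pair $(x^*, x_0) \in S_{X^*} \times S_X$ via $T(y) = x^*(y) x_0$, which lets us move fluidly between Daugavet-equation estimates and slice-geometry estimates.

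For $(1) \Rightarrow (2)$, fix $x \in S_X$, $\varepsilon > 0$, and a slice $S = \{y \in B_X : x^*(y) > 1 - \alpha\}$ with $\|x^*\| = 1$. I would apply the Daugavet property to the rank-one operator $T(y) = -x^*(y) x$, which has norm one, so $\|I + T\| = 2$. For small $\eta > 0$ (to be chosen), pick $y \in B_X$ with $\|y - x^*(y) x\| > 2 - \eta$, and set $\lambda = x^*(y)$. The triangle inequality forces $\|y\| > 1 - \eta$ and $|\lambda| > 1 - \eta$. If $\lambda > 0$ then $y \in S$ (taking $\eta < \alpha$) and
$$\|y - x\| \geq \|y - \lambda x\| - (1 - \lambda) > 2 - 2\eta.$$
If instead $\lambda < 0$, I would replace $y$ by $-y$: then $x^*(-y) = -\lambda > 1 - \eta$ places $-y$ in the slice, and $\|(-y) - x\| = \|y + x\| \geq \|y + |\lambda| x\| - (1 - |\lambda|) > 2 - 2\eta$. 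Either way, a choice $\eta < \varepsilon/2$ delivers the required point in $S$.

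For $(2) \Rightarrow (1)$, given a rank-one operator $T$, I may by homogeneity assume $\|T\| = 1$ and write $T(y) = x^*(y) x_0$ with $\|x^*\| = \|x_0\| = 1$. For fixed $\varepsilon > 0$, apply hypothesis (2) to the point $-x_0 \in S_X$ and the slice $S = \{y \in B_X : x^*(y) > 1 - \varepsilon\}$ to obtain $y \in S$ with $\|y + x_0\| > 2 - \varepsilon$. Then
$$\|(I+T)y\| = \|y + x^*(y) x_0\| \geq \|y + x_0\| - (1 - x^*(y)) > 2 - 2\varepsilon,$$
so $\|I+T\| \geq 2 - 2\varepsilon$; letting $\varepsilon \to 0$ and using $\|I+T\| \leq 2$ completes the proof.

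The only genuinely delicate point is the sign ambiguity in the forward direction: the Daugavet equation produces a vector $y$ witnessing $\|I + T\| = 2$, but nothing guarantees a priori that $y$ lies in the prescribed slice rather than in its reflection. The key observation which unlocks the argument is that the particular rank-one operator $T(y) = -x^*(y)x$ is odd, so $y$ and $-y$ play symmetric roles, and at least one of them must be in the slice once $|x^*(y)|$ is close to $1$. Aside from this, the computations are just two triangle-inequality estimates and do not involve any deeper structure.
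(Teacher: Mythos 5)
This theorem is quoted from \cite[Theorem 2.1]{kssw} and the paper supplies no proof of its own, so there is nothing internal to compare against; your argument is correct and is essentially the classical one. Both implications check out: the sign-flip trick in $(1)\Rightarrow(2)$ correctly resolves the only real subtlety, and the estimates are right. One small point deserves a word in $(2)\Rightarrow(1)$: the reduction to $\Vert T\Vert=1$ is not literal homogeneity, since the Daugavet equation $\Vert I+T\Vert=1+\Vert T\Vert$ does not scale linearly in $T$; you need the standard lemma that $T$ satisfies the equation if and only if $\lambda T$ does for every $\lambda>0$ (proved by a convex-combination estimate for $\lambda\geq 1$ and an approximate-maximizer estimate for $\lambda<1$), or else run your computation directly with $T=x^*\otimes x_0$, $\Vert x^*\Vert=1$, $\Vert x_0\Vert=c$ arbitrary, applying (2) to $-x_0/c$ and absorbing the extra factor of $c$ into the error term.
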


\begin{theorem}\label{theo:carad2pslices}
\cite[Theorem 1.4]{ik2004} Let $X$ be a Banach space. The following assertions are equivalent:
\begin{enumerate}
\item $X$ is a space with bad projections.
\item For every $x\in S_X$, every $\varepsilon>0$ and every slice $S$ of $B_X$ containing $x$ there exists $y\in S$ such that
$$\Vert x-y\Vert>2-\varepsilon.$$
\end{enumerate}
\end{theorem}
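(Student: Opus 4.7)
My plan is to prove the equivalence by encoding each rank-one projection as a pair $(f,x_0)\in X^{*}\times S_X$ with $f(x_0)=1$, and relating the operator norm $\|I-P\|$ of $P=f\otimes x_0$ to the diameter of slices through $x_0$ defined by $f$.

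For the implication $(2)\Rightarrow(1)$, given $P=f\otimes x_0$ as above, the natural slice to apply (2) to is
$$S_\delta := \{y\in B_X : f(y)>1-\delta\},$$
which is a proper slice of $B_X$ containing $x_0$ for every sufficiently small $\delta>0$. The hypothesis (2) produces, for each $\varepsilon>0$, a point $y_\delta\in S_\delta$ with $\|y_\delta-x_0\|>2-\varepsilon$. The triangle inequality
$$\|(I-P)y_\delta\| = \|y_\delta-f(y_\delta)x_0\|\geq \|y_\delta-x_0\|-|f(y_\delta)-1|$$
then drives $\|I-P\|\geq 2$, provided $|f(y_\delta)-1|$ can be controlled. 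When $\|f\|=1$ this is automatic, since $f(y_\delta)\in(1-\delta,1]$. When $\|f\|>1$, one additionally uses the elementary bound $\|I-P\|\geq \|f\|-1$ (which already settles $\|f\|\geq 3$) together with a symmetric application of (2) at $-x_0$ and the slice $\{y\in B_X : f(y)<-(1-\delta)\}$, choosing between the two witnesses to bracket $f(y_\delta)$ near $\pm 1$.

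For the implication $(1)\Rightarrow(2)$, fix $x\in S_X$, a slice $S\ni x$, and $\varepsilon>0$. By Hahn--Banach pick $\phi\in S_{X^{*}}$ with $\phi(x)=1$ and consider the rank-one projection $Py:=\phi(y)x$. Hypothesis (1) furnishes $y_0\in B_X$ with $\|y_0-\phi(y_0)x\|>2-\varepsilon/2$; an elementary computation forces $|\phi(y_0)|$ arbitrarily close to $1$. Assuming (after replacing $y_0$ with $-y_0$ if necessary) that $\phi(y_0)>0$, the point $y_0$ lies in a small slice of the form $\{y\in B_X : \phi(y)>1-\varepsilon/2\}$ and satisfies $\|y_0-x\|>2-O(\varepsilon)$. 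To match the prescribed slice $S$, one perturbs $\phi$ by a small multiple of the functional defining $S$, rescaling to preserve $\phi_t(x)=1$, and repeats the argument; for $t$ small enough the perturbed witness still lies in $S$.

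The main obstacle is the intermediate regime $1<\|f\|<3$ in the first direction: the slice $S_\delta$ may contain points $y$ with $f(y)$ strictly greater than $1$, so the naive bound on $|f(y_\delta)-1|$ can fail. Resolving this requires a careful case analysis combining witnesses from the two slices at $\pm x_0$, or alternatively an approximation via Bishop--Phelps to reduce to the norm-attaining case.
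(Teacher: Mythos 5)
This statement is imported from \cite[Theorem 1.4]{ik2004}; the paper states it without proof, so your attempt can only be measured against the standard argument. The direction $(1)\Rightarrow(2)$ of your sketch is essentially right: folding the defining functional $g$ of the prescribed slice into the projection (your $\phi_t$) and letting a witness of $\|I-P_t\|\geq 2$ do the work is exactly the mechanism behind the nested-slice Lemma~1.4 of \cite{ik2004} that the paper exploits in Lemmas~\ref{lemma:deltnestedslices} and~\ref{lem:delta_points_nested_slices}. Be aware, though, that your final quantifiers are reversed: for fixed $t$, the witness $y_t$ with $\|(I-P_t)y_t\|>2-\eta$ lands in $S(g,\alpha)$ only when $\eta$ is small compared with $t$ and with $g(x)-(\sup_{B_X}g-\alpha)$; taking ``$t$ small enough'' with the defect fixed does not work.

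The genuine gap is in $(2)\Rightarrow(1)$, and you flag it yourself without closing it; neither proposed repair succeeds. The symmetric slice $\{y: f(y)<-(1-\delta)\}$ at $-x_0$ produces a witness whose $f$-value ranges over $[-\|f\|,-(1-\delta))$, i.e.\ it has exactly the same defect, and nothing forces either witness's $f$-value to sit near $\pm1$, so there is no ``bracketing''. Bishop--Phelps is a non-starter: statement (1) quantifies over \emph{all} rank-one projections, so you must bound $\|I-f\otimes x_0\|$ for the given $f$; replacing $f$ by a norm-attaining approximant changes the operator (and need not even yield a projection), and norm attainment gives no control of $f$ on $S_\delta$ anyway. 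The obstacle is in fact an artifact of the crude estimate $\|y_\delta-f(y_\delta)x_0\|\geq\|y_\delta-x_0\|-|f(y_\delta)-1|$, which demands a two-sided bound on $f(y_\delta)$ where only a lower bound is available. Replace it as follows: pick $y^*\in S_{X^*}$ with $y^*(x_0-y_\delta)>\|x_0-y_\delta\|-\eps>2-2\eps$, so that $y^*(x_0)>1-2\eps$ and $-y^*(y_\delta)>1-2\eps$; then
\[
\|(I-P)y_\delta\|\geq y^*\bigl(f(y_\delta)x_0-y_\delta\bigr)=f(y_\delta)\,y^*(x_0)-y^*(y_\delta)>(1-\delta)(1-2\eps)+(1-2\eps)\geq 2-\delta-4\eps,
\]
where only $f(y_\delta)>1-\delta>0$ is used and a large value of $f(y_\delta)$ only increases the bound. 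This disposes of the regime $1<\|f\|<3$ (and of every other one) in one line, with no case analysis.
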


The previous characterizations motivated the authors of \cite{ahlp} to introduce local versions of the above geometric properties, which will be central in the present paper. Given a Banach space $X$ and a point $x\in S_X$, it is said that the point $x$ is:
\begin{itemize}
\item a \textit{Daugavet point} if, given any slice $S$ of $B_X$ and any $\varepsilon>0$ then there exists $y\in S$ with $\Vert x-y\Vert>2-\varepsilon$.

\item a \textit{$\Delta$-point} if, given any slice $S$ of $B_X$ containing $x$ and any $\varepsilon>0$ then there exists $y\in S$ with $\Vert x-y\Vert>2-\varepsilon$.
\end{itemize}

In the previous language, a Banach space $X$ has the Daugavet property (respectively, is a Banach space with bad projections) if every point in $S_X$ is a Daugavet point (respectively, a $\Delta$-point). Deeper connections between Daugavet and $\Delta$-points with the Daugavet property are exhibited in \cite{ahlp}. Furthermore, examples of Daugavet and $\Delta$-points are exhibited in some classical Banach spaces in \cite[Section 3]{ahlp}.

The main aim of this paper is to study Daugavet points and $\Delta$-points in Lipschitz-free spaces (see formal definition in Section \ref{sect:notation}). In the last years, geometric properties around Daugavet property have been deeply studied (see. e.g. \cite{blr2018,gpr2018,lr2020,o2020,pr2018}).
Among these result, the one which is of particular interest to us is \cite[Theorem 3.5]{gpr2018} where it is proved that a Lipschitz-free space $\mathcal F(M)$ has the Daugavet property if, and only if, the metric space $M$ is \textit{length} (i.e. for every pair of distinct points $x,y\in M$ then $d(x,y)$ equals the infimum of the length of rectifiable curves in $M$ joining them). We will take advantage of a localization of this property in order to obtain sufficient conditions for a molecule $m_{x,y}$ to be a $\Delta$-point in Section \ref{sect:delta} which, in a large class of examples, will turn out to be an equivalence. Furthermore, in Section \ref{sect:daugavet} we obtain a characterization of when any element $\mu\in S_{\mathcal F(M)}$ is a Daugavet point in terms of a separation condition from the set of denting points. As a consequence of all the above mentioned results, we show that Lipschitz-free Banach spaces are a class where one can easily and naturally produce examples of $\Delta$-points which are not Daugavet points, in contrast with previously known examples which required the study of absolute sums of Banach spaces \cite[Corollary 5.5]{ahlp} or technical constructions of Banach spaces with 1-unconditional bases \cite[Theorem 3.1]{almt}.
 
Let us pass now to describe with more detail the content of the paper. In Section \ref{sect:notation} we present necessary notation together with some preliminary results. In Section \ref{sect:daugavet} we study Daugavet points in $\mathcal F(M)$ and in $\Lip(M)$. We prove in Theorem \ref{theo:charadaugapoint} that given a compact metric space $M$ then an element $\mu\in S_{\mathcal F(M)}$ is a Daugavet point if $\mu$ is at distance $2$ from every denting point of $B_{\mathcal F(M)}$. Furthermore, when $\mu$ is an element of the form $m_{x,y}$, we characterise the fact that $\mu$ is a Daugavet point in terms of a geometric condition on the points $x,y\in M$. We end the section with a brief discussion about Daugavet points in $\Lip(M)$ in connection with locality properties of Lipschitz functions. In Section \ref{sect:delta} we define the concept of \textit{connectable molecule} (see Definition \ref{defi:connect}), which can be seen as a localization of length property of metric spaces. We prove that if $m_{x,y}$ is a connectable molecule then it is a $\Delta$-point. This permits, on the one hand, to get a procedure to construct Lipschitz-free spaces with molecules which are $\Delta$- but not Daugavet points. On the other hand, in the search of conditions under which molecules which $\Delta$-points are connectable we prove that a molecule $m_{x,y}$ is a $\Delta$-point if, and only if for a given slice $S$ containing $m_{x,y}$ and $\eps >0$ there exist $u, v \in M$ with $0 < d(u,v) < \eps$ such that $m_{u,v} \in S$ (Theorem \ref{prop:deltapointlocal}). We end the section with some conditions under which every molecule which is a $\Delta$-point is connectable (Theorems \ref{theo:equideltacomconv} and \ref{theo:MLUR_equideltacomconv}) in subsets of strictly convex Banach spaces.

\section{Notation and preliminary results}\label{sect:notation}

We will consider only real Banach spaces. Given a Banach space $X$, we denote the closed ball (respectively, the sphere) of $X$ centered at $x \in X$ with radius $r >0$ by $B_X (x, r)$ (respectively, $S_X (x, r)$). If the center is $0$ and the radius is $1$, we simply denote the closed unit ball and the unit sphere by $B_X$ and $S_X$, respectively. 
We will also denote by $X^*$ the topological dual of $X$. By a \textit{slice of $B_X$} we mean a set of the following form
$$S(x^*,\alpha):=\{x\in B_X :x^*(x)>\sup x^*( B_X )-\alpha\}$$
where $x^*\in X^*$ and $\alpha>0$. If $X$ is a dual Banach space, the previous set will be called a \textit{$w^*$-slice} if $x^*$ belongs to the predual of $X$. Note that finite intersections of slices of $C$ (respectively of $w^*$-slices of $C$) form a basis for the inherited weak (respectively weak-star) topology of $C$.

Let $X$ be a Banach space and let $x\in S_X$. Recall that $x$ is a \textit{preserved extreme point} if $x$ is an extreme point of $B_{X^{**}}$. Also, $x$ is a \textit{denting point} if there exist slices of $B_X$ of arbitrarily small diameter containing $x$. We will denote by $\dent{B_X}$ the set of denting points of $B_X$.
 
It is known that $\Delta$-points in a Banach space $X$ can be characterized in the following way (see \cite[Lemma 2.1]{ahlp}). Let $x \in S_X$ be given. 
\begin{enumerate}
\item $x$ is a $\Delta$-point;
\item for every slice $S$ of $B_X$ with $x \in S$ and $\eps >0$, there exists $y \in S$ such that $\|x -y \| \geq 2-\eps$;
\item for every $x^* \in X^*$ with $x^* (x) = 1$, the projection $P = x^* \otimes x$ satisfies $\| I - P \| \geq 2$. 
\end{enumerate} 

The previous result allows us to obtain the following further characterization of $\Delta$-points, which is probably well known for specialist, but whose proof we include here for the sake of completeness. 

\begin{lemma}\label{lemma:deltnestedslices}
Let $X$ be a Banach space and $x \in S_X$ be a $\Delta$-point. For every $\eps >0$ and every slice $S = S(f, \alpha)$ of $B_X$ with $x \in S$ and $\frac{\alpha}{1-\alpha} < \eps$, there exists a slice $S(g, \alpha_1)$ of $B_X$ such that $S(g, \alpha_1) \subset S(f, \alpha)$ and $\|x - z \| \geq 2 -\eps$ for all $z \in S(g, \alpha_1)$. 
\end{lemma}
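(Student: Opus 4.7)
The plan is to use characterization (2) of $\Delta$-points from the list just above (for any slice of $B_X$ containing $x$ and any $\eps>0$, the slice meets a point at distance at least $2-\eps$ from $x$) to pick, inside a thin sub-slice of $S(f,\alpha)$, a point $y$ at distance close to $2$ from $x$, and then to tilt the functional $f$ in the direction of a Hahn--Banach functional $g$ norming $y-x$. The tilt produces a new slice that is sandwiched inside $S(f,\alpha)$ and that automatically forces all its points to be far from $x$.

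Concretely, I would normalize $\|f\|=1$ and fix $\alpha'\in(1-f(x),\alpha)$ (possible since $f(x)>1-\alpha$), so that $x\in S(f,\alpha')$; for a small parameter $\delta>0$ the $\Delta$-property applied to this sub-slice produces $y\in S(f,\alpha')$ with $\|y-x\|>2-\delta$, and Hahn--Banach yields $g\in S_{X^*}$ with $g(y-x)=\|y-x\|$, whence $g(y)>1-\delta$ and $g(x)<-1+\delta$. With $\lambda>0$ to be chosen, I would form $\tilde g:=f+\lambda g$ and observe that $\tilde g(y)>(1-\alpha')+\lambda(1-\delta)$ provides a lower bound for $\|\tilde g\|$. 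For $z\in S(\tilde g,\alpha_1)$, the defining inequality $\tilde g(z)>\|\tilde g\|-\alpha_1$ combined with the bounds $f(z)\leq 1$ and $g(z)\leq 1$ yields, on the one hand, $f(z)>1-\alpha'-\lambda\delta-\alpha_1$ (forcing the inclusion $S(\tilde g,\alpha_1)\subset S(f,\alpha)$ whenever $\lambda\delta+\alpha_1\leq\alpha-\alpha'$), and, on the other hand, $g(z)>1-\delta-(\alpha'+\alpha_1)/\lambda$, so that
\[
\|z-x\|\geq g(z-x)=g(z)-g(x)>2-2\delta-\frac{\alpha'+\alpha_1}{\lambda},
\]
which is at least $2-\eps$ as soon as $2\delta+(\alpha'+\alpha_1)/\lambda\leq\eps$.

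The main obstacle is the parameter selection: one must find $\delta>0$, $\alpha_1>0$, $\alpha'\in(1-f(x),\alpha)$ and $\lambda>0$ simultaneously satisfying
\[
\lambda\delta+\alpha_1\leq\alpha-\alpha'\qquad\text{and}\qquad 2\delta+\frac{\alpha'+\alpha_1}{\lambda}\leq\eps.
\]
Writing $\theta:=1-f(x)<\alpha$ and letting $\alpha_1\to 0$, $\alpha'\to\theta$, this reduces to $\delta(2\alpha-\theta)\leq(\alpha-\theta)\eps$, which has positive solutions since $\alpha>\theta$; with such a $\delta$, taking $\lambda$ roughly equal to $\theta/(\eps-2\delta)$ closes the argument. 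The role of the hypothesis $\alpha/(1-\alpha)<\eps$ is precisely to keep $\lambda$ under control, so that $\tilde g$ is a genuine small perturbation of $f$ and $S(\tilde g,\alpha_1)$ is a slice of $B_X$ with the claimed nested form.
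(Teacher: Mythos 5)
Your argument is correct, but it takes a genuinely different route from the paper's. The paper's proof follows \cite[Lemma 2.1]{kssw}: it invokes the projection characterization of $\Delta$-points, namely $\|I-P\|\geq 2$ for $P=(f(x)^{-1}f)\otimes x$, picks $y^*\in S_{X^*}$ with $\|y^*-P^*y^*\|>2-\eta$, and takes the new slice functional to be the normalization of $P^*y^*-y^*=\frac{y^*(x)}{f(x)}f-y^*$; all estimates then come from evaluating this dual-side perturbation on points of the new slice. You stay on the primal side: you use the slice characterization of $\Delta$-points to produce a far point $y$ in a thinner sub-slice $S(f,\alpha')$, norm $y-x$ by Hahn--Banach to get $g$, and tilt $f$ to $f+\lambda g$. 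Both constructions perturb $f$ by an auxiliary functional that separates far points, but yours avoids adjoints and rank-one projections entirely, and your parameter bookkeeping does close: the compatibility condition $\delta(2\alpha-\alpha'-\alpha_1)\leq\eps(\alpha-\alpha'-\alpha_1)$ is solvable precisely because $1-f(x)<\alpha$, which holds for any $x\in S(f,\alpha)$. One consequence worth noting: contrary to your closing remark, the hypothesis $\frac{\alpha}{1-\alpha}<\eps$ plays no role in your argument, so you have in fact proved the stronger Lemma \ref{lem:delta_points_nested_slices} directly, bypassing the reduction through \cite[Lemma 1.4]{ik2004} that the paper uses to remove that hypothesis. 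The trade-off is a four-parameter selection ($\alpha'$, $\alpha_1$, $\delta$, $\lambda$) where the paper's proof needs only one small $\eta$.
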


\begin{proof} The proof will follow the lines of \cite[Lemma 2.1]{kssw}. Choose $\eta >0$ so small that $\eta < 1- \frac{1-\alpha}{f(x)}$ and $\eta < \eps - \frac{\alpha}{1-\alpha}$. If $P := (f(x)^{-1} f ) \otimes x$, then $\| I - P \| \geq 2$ by \cite[Lemma 2.1]{ahlp}. It follows that there exists $y^* \in S_{X^*}$ such that $\|y^* - P^* y^* \| > 2 -\eta$. Define $g \in X^*$ as 
$g = \frac{ P^* y^* - y^* }{\|P^* y^* - y^* \|}$
and $\alpha_1 = 1- \frac{2-\eta}{\|P^* y^* - y^* \|}$. If $z \in S(g, \alpha_1)$, then we have that 
\[
y^* (x) \frac{f(z)}{f(x)}  - y^* (z)> 2 -\eta.
\]
With no loss of generality, we may assume that $y^* (x) > 0$ (noting that $y^*(x)$ cannot be zero). Then we get that $y^* (x) \frac{f(z)}{f(x)} > 1- \eta$, so $f(z) > (1-\eta) f(x) > 1 - \alpha$. 
Moreover, $\| f(x)^{-1} x - z \| > 2-\eta$ since $f(x)^{-1} y^* (x) - y^* (z) > 2-\eta$. This implies that 
\[
\| x - z \| > (2-\eta) - \left( \frac{\alpha}{1-\alpha} \right)  > 2 -\eps. 
\]
\end{proof} 

By using \cite[Lemma 2.1]{ik2004}, we can have an improvement of the previous result. 

\begin{lemma}\label{lem:delta_points_nested_slices}
Let $X$ be a Banach space. Then $x \in S_X$ is a $\Delta$-point if and only if for every $\eps >0$ and every slice $S = S(f, \alpha)$ of $B_X$ with $x \in S$, there exists a slice $S(g, \alpha_1)$ of $B_X$ such that $S(g, \alpha_1) \subset S(f, \alpha)$ and $\|x - z \| \geq 2 -\eps$ for all $z \in S(g, \alpha_1)$. 
\end{lemma}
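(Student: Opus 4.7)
The backward implication is immediate from the definition: given any slice $S$ of $B_X$ containing $x$ and any $\varepsilon>0$, the hypothesis yields a sub-slice $S(g,\alpha_1)\subseteq S$ each of whose points is at distance at least $2-\varepsilon$ from $x$; any such point lies in $S$ and witnesses that $x$ is a $\Delta$-point.

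For the forward direction, I would fix $\varepsilon>0$ and an arbitrary slice $S(f,\alpha)$ of $B_X$ with $x\in S(f,\alpha)$. Lemma~\ref{lemma:deltnestedslices} already delivers the required nested sub-slice under the additional hypothesis $\alpha/(1-\alpha)<\varepsilon$, so the plan is to reduce to this small-parameter regime. To do so, I would invoke \cite[Lemma 2.1]{ik2004}, which asserts that inside any slice of $B_X$ containing a point of the unit sphere one can find a nested sub-slice still containing that point whose normalized slice parameter is as small as one wishes. Applying this to $x\in S(f,\alpha)$ with target strictly smaller than $\varepsilon/(1+\varepsilon)$ yields $f'\in X^*$ and $\alpha''>0$ with $x\in S(f',\alpha'')\subseteq S(f,\alpha)$ and $\alpha''/(1-\alpha'')<\varepsilon$.

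Since $x$ is a $\Delta$-point, Lemma~\ref{lemma:deltnestedslices} applied to $S(f',\alpha'')$ with $\varepsilon$ then produces a further nested sub-slice $S(g,\alpha_1)\subseteq S(f',\alpha'')\subseteq S(f,\alpha)$ satisfying $\|x-z\|\geq 2-\varepsilon$ for every $z\in S(g,\alpha_1)$, which is precisely the sub-slice demanded by the statement. The main obstacle is the shrinking step: without the external device supplied by \cite[Lemma 2.1]{ik2004} the quantitative estimates of Lemma~\ref{lemma:deltnestedslices} degrade irrecoverably once $\alpha$ fails to be small, so the content of the present lemma is essentially the ability to replace an arbitrary slice by one with controlled parameter. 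Once that replacement is available, the conclusion follows by a straightforward concatenation of nested slices and a single appeal to Lemma~\ref{lemma:deltnestedslices}.
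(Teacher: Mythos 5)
Your proposal is correct and follows essentially the same route as the paper: the backward implication is immediate, and the forward one is obtained by first shrinking the given slice to a nested sub-slice containing $x$ with small parameter via the Ivakhno--Kadets lemma and then applying Lemma~\ref{lemma:deltnestedslices}. The only cosmetic difference is the citation label for the shrinking lemma (the paper's statement and proof themselves cite it inconsistently as Lemma 2.1 versus Lemma 1.4 of \cite{ik2004}), which does not affect the argument.
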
 

\begin{proof}
We only need to prove the ``only if" part. Let $\eps >0$ and a slice $S = S(f, \alpha)$ of $B_X$ with $x \in S$ be given. Pick any $\eta > 0$ satisfying that $\eta < \alpha$ and $\frac{\eta}{1-\eta} < \eps$. By \cite[Lemma 1.4]{ik2004}, there exists $h \in S_{X^*}$ such that $x \in S(h, \eta) \subset S(f, \alpha)$. Applying the above Lemma to the slice $S(h, \eta)$ and $\eps >0$, we may find a slice $S(g, \alpha_1)$ of $B_X$ such that $S(g,\alpha_1) \subset S(h, \eta)$ and $\| x - z \| \geq 2 -\eps$ for all $z \in S (g,\alpha_1)$. 
As $S(g,\alpha_1)$ is contained as well in $S( f, \alpha)$, we are done. 
\end{proof} 

\begin{remark}\label{remark:nestedslicedaugavet}
Similar estimates to the ones of the previous two lemmas allow to prove the following result: Let $X$ be a Banach space and $x\in S_X$ be a Daugavet point. Then, for every slice $S$ of $B_X$ and every $\varepsilon>0$, there exists a slice $T$ of $B_X$ contained in $S$ and such that
$$\Vert x-z\Vert>2-\varepsilon$$
holds for every $z\in T$.
\end{remark}

\begin{remark}\label{remark:nestedslice}
Let us explain our interest in the Lemma \ref{lem:delta_points_nested_slices}. Let $X$ be a Banach space and let $A$ be a subset of $B_X$ such that $\overline{\co}(A)=B_X$. Pick a $\Delta$-point $x\in S_X$. By definition, given a slice $S$ containing $x$, there are elements $y\in S$ such that $\Vert x-y\Vert>2-\varepsilon$. Lemma \ref{lem:delta_points_nested_slices} allows us to guarantee that one such element $y$ can be found in $A$. Indeed, Lemma \ref{lem:delta_points_nested_slices} implies the existence of a slice $T$ contained in $S$ satisfying that every element $y\in T$ satisfies $\Vert x-y\Vert>2-\varepsilon$. Now, since $\overline{\co}(A)=B_X$, $A$ intersects every slice of $B_X$, in particular $T\cap A\neq \emptyset$. This is a property of big relevance for Section \ref{sect:delta} (in particular, for Lemma \ref{lemma:lemanecdeltaclose}).
\end{remark}

Let us pass now to introduce necessary notation on Lipschitz-free spaces together with a preliminary result. Given a metric space $M$ and a point $x\in M$, we will denote by $B(x,r)$ (respectively, $S(x,r)$) the closed ball (respectively, sphere) centered at $x$ with radius $r$. Given two points $x,y\in M$ we define the \textit{metric segment} by
$$[x,y]:=\{z\in M: d(x,y)=d(x,z)+d(y,z)\}.$$
Let $M$ be a metric space with a distinguished point $0 \in M$.
The couple $(M,0)$ is commonly called a \emph{pointed metric space}.
By an abuse of language we will say only ``let $M$ be a pointed metric space'' and similar sentences.
The vector space of Lipschitz functions from $M$ to $\mathbb R$ will be denoted by $\justLip(M)$.
Given a Lipschitz function $f\in \justLip(M)$, we denote its Lipschitz constant by
\[ \norm{f}_{L} = \sup\left\{ \frac{|f(x)-f(y)|}{d(x,y)} : x,y\in M, x\neq y\right\}. \]
This is a seminorm on $\justLip(M)$ which is clearly a Banach space norm on the space $\Lip(M)\subset \justLip(M)$ of Lipschitz functions on $M$ vanishing at $0$.

We denote by $\delta$ the canonical isometric embedding of $M$ into $\Lip(M)^*$, which is given by $\langle f, \delta(x) \rangle =f(x)$ for $x \in M$ and $f \in \Lip(M)$. We denote by $\mathcal{F}(M)$ the norm-closed linear span of $\delta(M)$ in the dual space $\Lip(M)^*$, which is usually called the \textit{Lipschitz-free space over $M$}; for background on this, see the survey \cite{Godefroy_2015} and the book \cite{Weaver2} (where it receives the name of ``Arens-Eells space''). It is well known that $\mathcal{F}(M)$ is an isometric predual of the space $\Lip(M)$ \cite[p. 91]{Godefroy_2015}. We will write $\delta_x:=\delta(x)$ for $x\in M$, and use the name \textit{molecule} for those elements of $\mathcal F(M)$ of the form
$$m_{x,y}:=\frac{\delta_x-\delta_y}{d(x,y)}$$
for $x,y\in M$ with $x\neq y$. With a slight abuse of notation, we shall write $f (m_{x,y})$ to denote $\frac{f(x) - f(y)}{d(x,y)}$.

It is convenient to recall an important tool to construct Lipschitz functions: the classical McShane's extension theorem. It says that if $N \subseteq M$ and $f \colon N \longrightarrow \mathbb{R}$ is a Lipschitz function, then there is an extension to a Lipschitz function $F \colon M \longrightarrow \mathbb{R}$ with the same Lipschitz constant; see for example \cite[Theorem 1.33]{Weaver2}.

Let us now consider the following definitions.

\begin{definition}\label{defi:localetal}
Let $M$ be a metric space and $f\in \Lip(M)$.

\begin{enumerate}
\item  We say that $f$ is \textit{local} if, for every $\varepsilon>0$ there exists $u\neq v\in M$ with $0<d(u,v)<\varepsilon$ and such that $f(m_{u,v})> \|f\|-\varepsilon$.
\item We say that a point $t \in M$ is an \emph{$\eps$-point of $f$} if in every neighborhood $U \subset M$ of $t$, there exist $u \neq v \in U$ such that $f(m_{u,v}) > \|f\|-\varepsilon$. 
\item We say that $f$ is \emph{spreadingly local} if, for every $\eps >0$, there are infinitely many $\eps$-points of $f$. \end{enumerate}
\end{definition}

The above definitions come from the paper \cite{ikw}, where it was proved that if $M$ is a compact metric space then if every Lipschitz function is local then $\Lip(M)$ has the Daugavet property. Later, in \cite{gpr2018} it was proved that it is actually a characterization even when $M$ is complete.

Let us end with the following preliminary lemma, motivated by the ideas around the results of \cite[Section 3]{pr2018}, which tells us that one way of finding far molecules to a given element of $\mathcal F(M)$ is to look for close enough points. Namely, we have the following result.

\begin{theorem}\label{theo:prelipuntoscer}
Let $M$ be a metric space and let $u_n, v_n$ be two sequences in $M$ such that $0<d(u_n,v_n)$ for every $n$ and that $d(u_n,v_n)\rightarrow 0$. Then, for every $\mu\in S_{\mathcal F(M)}$ we get that 
$$\Vert \mu+m_{u_n,v_n}\Vert\rightarrow 2.$$
\end{theorem}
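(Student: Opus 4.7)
The plan is to combine a standard closedness reduction with an explicit McShane extension. First, observe that the set $A := \{\mu \in S_{\mathcal F(M)} : \lim_n \|\mu + m_{u_n, v_n}\| = 2\}$ is norm-closed in $S_{\mathcal F(M)}$ by the triangle inequality: if $\mu_k \in A$ with $\mu_k \to \mu$ in norm, then for $\varepsilon > 0$ one picks $k$ large so that $\|\mu - \mu_k\| < \varepsilon/2$ and $n$ large so that $\|\mu_k + m_{u_n, v_n}\| > 2 - \varepsilon/2$, yielding $\|\mu + m_{u_n, v_n}\| > 2 - \varepsilon$. Since finitely supported elements are norm-dense in $\mathcal F(M)$, it suffices to prove the slightly stronger scaled statement $\lim_n \|\mu + m_{u_n, v_n}\| = \|\mu\| + 1$ for every finitely supported $\mu = \sum_{k=1}^K c_k \delta_{x_k}$ with distinct $x_k$; the closedness reduction then recovers the full unit sphere.

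For such a $\mu$, fix $\varepsilon > 0$ and pick $g \in B_{\Lip(M)}$ with $g(\mu) > \|\mu\| - \varepsilon$. I would enlarge the support set to include the basepoint, working with $D_n := \{0, x_1, \ldots, x_K, u_n, v_n\}$, and define $f_n : D_n \to \mathbb{R}$ by $f_n(0) := 0$, $f_n(x_k) := g(x_k)$ (using $g(0) = 0$), together with two values $f_n(u_n), f_n(v_n)$ (specified below) satisfying $f_n(u_n) - f_n(v_n) = d(u_n, v_n) =: a_n$. McShane's extension theorem then produces $\tilde f_n \in \Lip(M)$ with Lipschitz seminorm equal to the Lipschitz constant $K_n$ of $f_n|_{D_n}$ and with $\tilde f_n(0)=0$; since $\tilde f_n(\mu) = g(\mu)$ and $\tilde f_n(m_{u_n, v_n}) = 1$, the normalization $\tilde f_n / K_n \in B_{\Lip(M)}$ gives
\[
\|\mu + m_{u_n, v_n}\|_{\mathcal F(M)} \;\ge\; (\tilde f_n / K_n)(\mu + m_{u_n, v_n}) \;=\; \frac{g(\mu) + 1}{K_n},
\]
so the whole argument reduces to establishing $K_n \to 1$.

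To control $K_n$, pass to a subsequence and use the finiteness of $\{0, x_1, \ldots, x_K\}$ (write $x_0 := 0$) to split into two cases. In Case (A), $\min_k \min(d(u_n, x_k), d(v_n, x_k)) \geq \delta > 0$ for all large $n$; here pick $f_n(u_n) := g(u_n) + (a_n - s_n)/2$ and $f_n(v_n) := g(v_n) - (a_n - s_n)/2$, where $s_n := g(u_n) - g(v_n) \in [-a_n, a_n]$, and the estimate $|f_n(u_n) - g(x_k)| \leq d(u_n, x_k) + a_n$ (with the symmetric one for $v_n$) yields $K_n \leq 1 + a_n/\delta \to 1$. Case (B) is the main obstacle: there is some $k_0$ with $u_n \to x_{k_0}$ (hence $v_n \to x_{k_0}$ since $a_n \to 0$), and the previous choice blows up because $d(u_n, x_{k_0}) \to 0$. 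Here I would anchor to $x_{k_0}$ by setting $f_n(u_n) := g(x_{k_0}) + d(u_n, x_{k_0})$ (the extreme value consistent with $1$-Lipschitz from $x_{k_0}$) and $f_n(v_n) := f_n(u_n) - a_n$. Triangle inequalities then give $|f_n(v_n) - g(x_{k_0})| = |d(u_n, x_{k_0}) - a_n| \leq d(v_n, x_{k_0})$, and for $k \neq k_0$ they give $|f_n(u_n) - g(x_k)| \leq d(x_k, u_n) + 2 d(u_n, x_{k_0})$, whence $K_n \leq 1 + 2 d(u_n, x_{k_0})/\delta'$ with $\delta' := \tfrac{1}{2}\min_{k \neq k_0} d(x_{k_0}, x_k) > 0$, which still tends to $1$.

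Combining the two cases through a standard subsequence argument, every subsequence of $(n)$ has a further subsequence along which $\liminf \|\mu + m_{u_n, v_n}\| \geq g(\mu) + 1 \geq \|\mu\| + 1 - \varepsilon$; letting $\varepsilon \to 0$ gives $\lim_n \|\mu + m_{u_n, v_n}\| = \|\mu\| + 1$ for every finitely supported $\mu$, and the closedness of $A$ then delivers the conclusion for arbitrary $\mu \in S_{\mathcal F(M)}$. The single nontrivial point is Case (B), where the specific anchoring $f_n(u_n) = g(x_{k_0}) + d(u_n, x_{k_0})$ is what prevents $K_n$ from blowing up as $u_n, v_n$ collapse onto a support point of $\mu$.
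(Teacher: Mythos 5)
Your proposal is correct. It shares the paper's overall skeleton — reduce by density to finitely supported $\mu=\sum_k c_k\delta_{x_k}$, take a (nearly) norming functional $g$, and build by McShane an explicit Lipschitz function that agrees with $g$ on the support of $\mu$ while having slope exactly $1$ from $u_n$ to $v_n$, with Lipschitz constant tending to $1$ — but the two arguments resolve the crux (controlling that constant when $u_n,v_n$ collapse onto a point of $\supp\mu\cup\{0\}$) in genuinely different ways. The paper argues by contradiction, passes to a subsequence with $d(u_n,v_n)\leq \theta/2n$ where $\theta$ is the minimal gap between support points, and uses a single inf-convolution extension with inflated Lipschitz constant $\left(1-\tfrac1n\right)^{-1}$; the triangle-type inequality $\left(1-\tfrac1n\right)(d(x,y)+d(u_n,v_n))\leq d(x,u_n)+d(y,v_n)$ then makes one formula work uniformly, with no case distinction. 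You instead split into two cases after a subsequence extraction and, in the delicate case where $u_n\to x_{k_0}$, anchor the values at $u_n,v_n$ to $g(x_{k_0})$; the reverse triangle inequality $|d(u_n,x_{k_0})-d(u_n,v_n)|\leq d(v_n,x_{k_0})$ is exactly what keeps the slope to $x_{k_0}$ at most $1$, and the slopes to the remaining $x_k$ are controlled by the fixed separation $\delta'$. Your route is slightly longer but arguably more transparent about where the difficulty sits, and it avoids the paper's (true but unjustified-in-text) use of an exactly norming functional by working with an $\eps$-norming one and a direct $\liminf$/$\limsup$ argument rather than a contradiction. The only cosmetic imprecision is that your final bound in Case (B) should also account for the slope from $v_n$ to the $x_k$ with $k\neq k_0$, which involves $d(v_n,x_{k_0})\leq d(u_n,x_{k_0})+d(u_n,v_n)$ rather than $d(u_n,x_{k_0})$ alone; this changes only the constant and not the conclusion $K_n\to 1$.
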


\begin{proof}
Assume by contradiction that there exists $\mu \in S_{\mathcal F(M)}$ and $\varepsilon_0>0$ such that
$$\Vert \mu+m_{u_n,v_n}\Vert\leq 2-\varepsilon_0$$
holds for every $n\in\mathbb N$. Since linear combinations of evaluation mappings are dense in $\mathcal F(M)$ we can assume with no loss of generality that $\mu=\sum_{i=1}^n \lambda_i \delta_{x_i}$ where $N:=\{x_1,\ldots, x_n\}\subseteq M\setminus \{0\}$. Pick $f\in S_{\Lip(M)}$ with $f(\mu)=1$. 

Define $\theta:=\inf\limits_{x\neq y\in N} d(x,y)>0$. Up to taking a subsequence, we can assume that $d(u_n,v_n)\leq \frac{\theta}{2n}$ holds for every $n\in\mathbb N$. Hence
$$d(x,y)+d(u_n,v_n)\leq d(x,u_n)+d(y,v_n)+2d(u_n,v_n)\leq d(x,u_n)+d(y,v_n)+\frac{1}{n}(d(x,y)+d(u_n,v_n)),$$
so 
\[
\left(1-\frac{1}{n} \right)(d(x,y)+d(u_n,v_n))\leq d(x,u_n)+d(y,v_n)
\]
holds for every $x,y\in N$ with $x \neq y$ and every $n \in N$. 

We define, for every $n\in\mathbb N$, a Lipschitz function $g_n$ on $N \cup \{ u_n, v_n \}$ as $g_n (x) = f(x)$ for every $x \in N$, 
\[
g_n (u_n):= \inf\limits_{x\in N} \left(g_n(x)+\frac{1}{1-\frac{1}{n}}d(x, u_n)\right)
\]
and
\[
g_n(v_n):=\sup\limits_{x\in N\cup\{u_n\}} \left(g_n(x)-\frac{1}{1-\frac{1}{n}}d(x,v_n)\right).
\]
Notice that $\Vert g_n\Vert\leq \frac{1}{1-\frac{1}{n}}$ for every $n\in\mathbb N$ (see, for example, \cite[Proposition 1.32]{Weaver2}). Fix $n \in \N$ so large that $1 - \frac{1}{n} > 1 - \frac{\eps_0}{2}$. 
By McShane's theorem, we can extend to the whole $M$ without increasing its Lipschitz norm. Since $g_n$ agrees with $f$ on $N$, we get that $g_n(\mu)=1$. We claim that $g_n(u_n)-g_n(v_n)\geq d(u_n,v_n)$ holds for every $n\in\mathbb N$ (equivalently $g_n(m_{u_n,v_n})\geq 1$). By definition, there are $z\in N$ and $z'\in N\cup\{u_n\}$ such that $g_n(u_n)=f(z)+d(z,u_n)$ and $g_n(v_n)=g_n(z')-d(z',v_n)$. If $z'=u_n$, then $g_n(u_n)-g_n(v_n)=d(u_n,v_n)$. If $z'\in N$, we have that 
\[
\begin{split}
g_n(u_n)-g_n(v_n)& =f(z)-f(z')+\frac{1}{1-\frac{1}{n}}(d(z,u_n)+d(z',v_n))\\
&\geq f(z)-f(z')+d(z,z')+d(u_n,v_n) \geq d(u_n,v_n)
\end{split}
\]
since $f(z)-f(z') + d(z,z') \geq 0$. 
Now
$$2-\varepsilon_0\geq \Vert  \mu +m_{u_n ,v_n}\Vert\geq \frac{g_n(\mu + m_{u_n,v_n})}{\Vert g_n\Vert} \geq 2 \left(1-\frac{1}{n}\right),$$
a contradiction.
\end{proof}

\section{Daugavet points}\label{sect:daugavet}

In this section we will focus on studying Daugavet points in $\mathcal F(M)$ as well as in $\Lip(M)$. Let us start with the following easy observation, which says that Daugavet points have to be far from the set of denting points. 

\begin{proposition}\label{prop:Daugavet_point_distance_denting_point}
Let $X$ be a Banach space and $x_0 \in S_X$ be a Daugavet point. Then, for every $y\in \dent{B_X}$, we have that $d(x,y)=2$. 
\end{proposition}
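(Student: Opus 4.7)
The plan is to exploit the definition of denting point to produce slices of arbitrarily small diameter around $y$, and then to feed these slices into the Daugavet condition for $x_0$. Fix $y\in\dent{B_X}$ and $\varepsilon>0$. By definition of a denting point, there exists a slice $S=S(f,\alpha)$ of $B_X$ with $y\in S$ and $\diam(S)<\varepsilon$.

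Next, apply the Daugavet property of $x_0$ to this very slice $S$: since $x_0$ is a Daugavet point, there exists $z\in S$ with $\Vert x_0-z\Vert>2-\varepsilon$. Note that here we crucially do not need $x_0\in S$, which is precisely the difference between Daugavet and $\Delta$-points. Since both $y$ and $z$ belong to $S$, we have $\Vert y-z\Vert<\varepsilon$. The triangle inequality then yields
\[
\Vert x_0-y\Vert \geq \Vert x_0-z\Vert-\Vert z-y\Vert > (2-\varepsilon)-\varepsilon=2-2\varepsilon.
\]
Since $\varepsilon>0$ was arbitrary, $\Vert x_0-y\Vert\geq 2$. The reverse inequality $\Vert x_0-y\Vert\leq 2$ is trivial because $x_0,y\in B_X$, so $d(x_0,y)=2$.

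There is no real obstacle here; the statement is an immediate combination of the two definitions (Daugavet point and denting point). The only subtlety worth flagging is that one must use the formulation of Daugavet point that does not require $x_0$ to lie in the tested slice, which is exactly the feature distinguishing Daugavet points from $\Delta$-points and which makes the argument fail if one tried to prove the analogous statement for $\Delta$-points.
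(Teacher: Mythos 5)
Your proof is correct and uses the same idea as the paper: take a small-diameter slice around the denting point $y$, apply the Daugavet condition to that slice, and conclude via the triangle inequality. The paper phrases it as a contrapositive while you argue directly, but the two arguments are the same.
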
 

\begin{proof}
Suppose that there exists $y \in \dent{B_X}$ and $\eps >0$ such that $d ( x_0, y ) \leq 2 - \eps$. Choose a slice $S$ containing $y$ so that $\diam (S) < \frac{\eps}{2}$. Note that  
\[
d( x_0, z) \leq d(x_0, y) + d(y, z) < 2-\eps + \frac{\eps}{2} = 2 - \frac{\eps}{2}.
\]
for every $z \in S$. This implies that $x_0$ cannot be a Daugavet point. 
\end{proof} 

In general, the converse of the previous proposition is false. Indeed, $\dent{B_{\ell_\infty}}=\emptyset$, so $d(x,y)=2$ for every $x\in B_{\ell_\infty}$ and every $y\in \dent{B_{\ell_\infty}}$. However, $\ell_\infty$ fails the Daugavet property (see e.g. \cite[P. 78]{werner}), so there are elements in $B_{\ell_\infty}$ which are not Daugavet points.

In spite of the previous example, we will prove that the previous behavior does not occur for the class of Lipschitz-free spaces over compact metric spaces.

\begin{theorem}\label{theo:charadaugapoint}
Let $M$ be a compact metric space and $\mu\in S_{\mathcal F(M)}$. The following assertions are equivalent:
\begin{enumerate}
\item $\mu$ is a Daugavet point.
\item For every $\nu\in \dent{B_{\mathcal F(M)}}$ then $d(\mu,\nu)=2$.\end{enumerate}

Moreover, if $\mu$ is of the form $m_{x,y}$ for certain $x\neq y\in M$, then the previous two are equivalent to:

\begin{itemize}

\item [(3)] If $u,v\in M$ satisfy that $[u,v]=\{u,v\}$ then
$$d(x,y)+d(u,v)\leq \min\{d(x,u)+d(y,v), d(x,v)+d(y,u)\}.$$
\end{itemize}
\end{theorem}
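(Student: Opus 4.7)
The plan is to derive $(1)\Rightarrow (2)$ immediately from Proposition \ref{prop:Daugavet_point_distance_denting_point} and to obtain $(2)\Rightarrow (1)$ through a compactness-driven dichotomy, with Theorem \ref{theo:prelipuntoscer} handling the ``small diameter'' branch. For $\mu=m_{x,y}$, the equivalence with $(3)$ will come from the known description of $\dent{B_{\F{M}}}$ when $M$ is compact combined with a direct McShane-extension computation of the distance between two molecules.

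For the nontrivial implication $(2)\Rightarrow (1)$, fix a slice $S=S(f,\alpha)$ of $B_{\F{M}}$ with $\|f\|=1$ and an $\eps>0$; I seek some $\nu\in S$ with $\|\mu-\nu\|>2-\eps$. Because $\|f\|=\sup_{u\ne v}f(m_{u,v})$, there is a molecule $m_{u_0,v_0}\in S$, and we may fix $\eta>0$ with $f(m_{u_0,v_0})>1-\alpha+\eta$. Consider
\[
d^{\ast}:=\inf\bigl\{d(u,v):\, u\ne v\in M,\ f(m_{u,v})\ge 1-\alpha+\eta\bigr\}.
\]
If $d^{\ast}>0$, a minimizing sequence $(u_n,v_n)$ has, by compactness of $M$, a subsequential limit $(u^{\ast},v^{\ast})$ with $d(u^{\ast},v^{\ast})=d^{\ast}>0$; Lipschitz continuity of $f$ gives $f(m_{u^{\ast},v^{\ast}})\ge 1-\alpha+\eta$, so $m_{u^{\ast},v^{\ast}}\in S$. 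Any $z\in[u^{\ast},v^{\ast}]\setminus\{u^{\ast},v^{\ast}\}$ would provide, via the convex decomposition $m_{u^{\ast},v^{\ast}}=\lambda m_{u^{\ast},z}+(1-\lambda)m_{z,v^{\ast}}$, a pair with $f$-value $\ge 1-\alpha+\eta$ and strictly smaller distance, contradicting the minimality of $d^\ast$. Hence $[u^{\ast},v^{\ast}]=\{u^{\ast},v^{\ast}\}$, so $m_{u^{\ast},v^{\ast}}$ is a denting point of $B_{\F{M}}$ (by the known characterization of denting points in $\F{M}$ for compact $M$), and (2) yields $\|\mu-m_{u^{\ast},v^{\ast}}\|=2$. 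If instead $d^{\ast}=0$, pick $(u_n,v_n)$ with $f(m_{u_n,v_n})>1-\alpha$ and $d(u_n,v_n)\to 0$; since $-\mu\in S_{\F{M}}$, Theorem \ref{theo:prelipuntoscer} gives
\[
\|\mu-m_{u_n,v_n}\|=\|-\mu+m_{u_n,v_n}\|\longrightarrow 2,
\]
so $\nu=m_{u_n,v_n}\in S$ works for $n$ large enough.

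For $(2)\Leftrightarrow (3)$ with $\mu=m_{x,y}$, I would use that for compact $M$ the denting points of $B_{\F{M}}$ are exactly the molecules $m_{u,v}$ with $[u,v]=\{u,v\}$, and note that this set is symmetric under $m_{u,v}\leftrightarrow m_{v,u}=-m_{u,v}$. Consequently (2) is equivalent to
\[
\|m_{x,y}-m_{u,v}\|=2 \quad\text{and}\quad \|m_{x,y}+m_{u,v}\|=2
\]
for every $u\ne v$ with $[u,v]=\{u,v\}$. Each of these two equalities is handled by a direct McShane-extension argument: to realise $\|m_{x,y}-m_{u,v}\|=2$ one looks for $h\in B_{\Lip(M)}$ with $h(y)=0$, $h(x)=d(x,y)$ and $h(v)-h(u)=d(u,v)$, and the linear system imposed by $1$-Lipschitzness on $\{x,y,u,v\}$ is consistent precisely when $d(x,y)+d(u,v)\le d(x,u)+d(y,v)$, the remaining constraints reducing to ordinary triangle inequalities. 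Swapping the roles of $u$ and $v$ gives the analogous characterization $\|m_{x,y}+m_{u,v}\|=2\iff d(x,y)+d(u,v)\le d(x,v)+d(y,u)$. The conjunction of the two inequalities is $(3)$.

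The main obstacles are: (a) the dichotomy in $(2)\Rightarrow (1)$, since a priori we have no control on whether the slice contains a denting point, and compactness of $M$ is what allows us either to extract a minimiser (producing a denting molecule via the minimality argument) or to find molecules of arbitrarily small diameter (for which Theorem \ref{theo:prelipuntoscer} applied to $-\mu$, rather than $\mu$, closes the gap); and (b) the appeal to the denting-point description of $B_{\F{M}}$ for compact $M$, which is the only place where the compactness hypothesis in the statement is essentially used.
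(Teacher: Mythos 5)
Your proposal is correct, and its overall architecture matches the paper's proof: Proposition \ref{prop:Daugavet_point_distance_denting_point} gives (1)$\Rightarrow$(2); the implication (2)$\Rightarrow$(1) is a dichotomy in which one branch locates a denting molecule inside the given slice and the other produces molecules $m_{u_n,v_n}$ with $d(u_n,v_n)\to 0$ in the slice, handled by Theorem \ref{theo:prelipuntoscer}; and (2)$\Leftrightarrow$(3) rests on the chain ``$[u,v]=\{u,v\}$ iff $m_{u,v}$ is extreme iff preserved extreme iff denting'' for compact $M$. Where you genuinely diverge is in replacing the paper's two external black boxes by self-contained arguments. For (2)$\Rightarrow$(1) the paper splits according to whether $f$ is local and, in the non-local case, invokes \cite[Lemma 3.13]{cgmr2020} to get a strongly exposed (hence denting) molecule at which $f$ attains its norm inside the slice; your per-slice quantity $d^{\ast}=\inf\{d(u,v):f(m_{u,v})\ge 1-\alpha+\eta\}$, together with the compactness extraction and the convex splitting $m_{u^{\ast},v^{\ast}}=\lambda m_{u^{\ast},z}+(1-\lambda)m_{z,v^{\ast}}$ showing $[u^{\ast},v^{\ast}]=\{u^{\ast},v^{\ast}\}$ at a minimiser, proves exactly what is needed from that lemma without mentioning locality. (Your application of Theorem \ref{theo:prelipuntoscer} to $-\mu$ when $d^{\ast}=0$ is also the precise way to obtain $\Vert\mu-m_{u_n,v_n}\Vert\to 2$; the paper's statement for $\mu+m_{u_n,v_n}$ amounts to the same by the sign symmetry of that theorem.) For (2)$\Leftrightarrow$(3) the paper cites \cite[Theorem 2.4]{ar2020} for the equivalence $\Vert m_{x,y}\pm m_{u,v}\Vert=2$ iff $d(x,y)+d(u,v)\le\min\{d(x,u)+d(y,v),\,d(x,v)+d(y,u)\}$, whereas you rederive it by checking feasibility of the one-parameter system for a norming $1$-Lipschitz function on $\{x,y,u,v\}$ and extending by McShane; the feasibility conditions do reduce to the stated inequality plus triangle inequalities, so this is a valid and instructive substitute. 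Both versions tacitly use that every denting point of $B_{\F{M}}$ is a molecule, which is part of the cited extremal-structure results.
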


\begin{proof}
It is clear from Proposition \ref{prop:Daugavet_point_distance_denting_point} that (1)$\Rightarrow$(2). 

(2)$\Rightarrow$(1). Pick a slice $S=S(f,\alpha)$, where $f\in S_{\Lip(M)}$. Then we have two possibilities for $f$.

\begin{itemize}
\item $f$ is not local. Then by \cite[Lemma 3.13]{cgmr2020} $f$ attains its norm at a molecule $m_{u,v}$ which is strongly exposed point (in particular, it is a denting point). By the assumptions $\Vert \mu-m_{u,v}\Vert=2$ and, since the norm attaining condition, $m_{u,v}\in S$, and we are done in this case.

\item $f$ is local. In such a case, by definition, we can find a pair of sequences $u_n\neq v_n$ with $0<d(u_n,v_n)\rightarrow 0$ and such that $m_{u_n,v_n}\in S$ holds for every $n\in\mathbb N$. By Theorem \ref{theo:prelipuntoscer} we get that $\Vert \mu +  m_{u_n,v_n}\Vert\rightarrow 2$, and we are done.
\end{itemize}

(2)$\Leftrightarrow$(3). Assume now that $\mu=m_{x,y}$. Note that given $u,v\in M$, then $u,v$ satisfy that $[u,v]=\{u,v\}$ if, and only if, $m_{u,v}$ is an extreme point of $B_{\mathcal F(M)}$ \cite[Theorem 3.2]{appp2020}, which is in turn equivalent to be a preserved extreme point since $M$ is compact \cite[Theorem 4.2]{ag2019}, which in turn is equivalent to being a denting point by \cite[Theorem 2.4]{gppr2018}. Moreover, by \cite[Theorem 2.4]{ar2020} we get that $\Vert m_{x,y}\pm m_{u,v}\Vert=2$ is equivalent to the inequality $d(x,y)+d(u,v)\leq \min\{d(x,u)+d(y,v), d(x,v)+d(y,u)\}$.

From all these facts, (2) and (3) are equivalent.\end{proof}

Next, we exhibit an example of a metric space $M$ such that $\mathcal  F(M)$ does not have the Daugavet property but, on the other hand, there exists a Daugavet point $m_{x, y}$ in it.

\begin{example}\label{examp:daupoinnoda}
Let $M:=\{-1\}\cup [0,1]\subseteq \mathbb R$ and let $y=0, x=1$. Then $m_{x,y}$ is a Daugavet point. Indeed, if $u,v$ are such that $\{z\in M: d(z,u)+d(z,v)=d(u,v)\}=\{u,v\}$ then, up a relabeling, $u=0$ and $v=-1$. Moreover, notice that
$$d(x,y)+d(u,v)=2\leq \min\{d(x,u)+d(y,v), d(x,v)+d(y,u)\}=\min\{2,2\}=2.$$
By Theorem \ref{theo:charadaugapoint}, $m_{x,y}$ is a Daugavet point. However, it is easy to see that $\mathcal F(M)$ does not have the Daugavet property because it is clearly not a length space \cite[Theorem 3.5]{gpr2018}.
\end{example}

Now we turn to a brief discussion on Duagavet points and its $w^*$-version (see Definition \ref{defi:w*daugavetpoint}) on $\Lip(M)$. First of all, a local argument in \cite[Theorem 3.1]{ikw} yields, following word-by-word the proof, the following result.

\begin{proposition}\label{prop:daugaLipikw}
Let $M$ be a complete metric space and let $f \in S_{\Lip (M)}$. If $f$ is spreadingly local, then $f$ is a Daugavet point. 
\end{proposition}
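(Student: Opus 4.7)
My plan is to mirror the structure of the proof of \cite[Theorem 3.1]{ikw}, replacing the global locality hypothesis used there with the spreading locality of the single function $f$. For a fixed slice $S=S(\mu,\alpha)\subset B_{\Lip(M)}$ and $\eps>0$, the goal is to produce $g^{*}\in S$ with $\norm{f-g^{*}}_L>2-\eps$. First, I would reduce to the case where $\mu=\sum_{i=1}^{n}\lambda_i\delta_{x_i}$ is finitely supported with $\norm{\mu}_{\mathcal{F}(M)}=1$ (by density of evaluation functionals in $\mathcal{F}(M)$ and a mild tightening of $\alpha$), set $A:=\{0,x_1,\dots,x_n\}$, and pick a norming $h\in S_{\Lip(M)}$ with $h(\mu)=1$ via $w^*$-compactness of $B_{\Lip(M)}$.

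The heart of the argument is to locate a point in $M$ far from $A$ where $f$ nearly norms a molecule of very small diameter. Since $f$ is spreadingly local, for any $\eta'>0$ the set of $\eta'$-points of $f$ is infinite, so one such point $t$ lies outside the finite set $A$. Setting $r:=d(t,A)/3>0$ and applying the $\eta'$-point property to the smaller neighborhood $B(t,\eta' r)$ yields $u\neq v\in B(t,\eta' r)$ with $f(m_{u,v})>1-\eta'$ and $d(u,v)\leq 2\eta' r$. The crucial geometric inequality is $d(u,v)/d(u,x_i)\leq\eta'$, which follows since $d(u,x_i)\geq 2r$ for every $i$.

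The perturbation of $h$ is then defined on $A\cup\{u,v\}$ by $g(x_i):=h(x_i)$, $g(0):=0$, $g(u):=h(u)$, and $g(v):=h(u)+d(u,v)$. This choice forces $g(m_{u,v})=-1$, while the only delicate Lipschitz estimate, the one between $x_i$ and $v$, stays under control: $\norm{g}_L\leq (1+\eta')/(1-\eta')\leq 1+3\eta'$ on $A\cup\{u,v\}$ (for $\eta'$ small). McShane's theorem extends $g$ to $\tilde g\in\Lip(M)$ without increasing the Lipschitz constant, and rescaling gives $g^{*}:=(1+3\eta')^{-1}\tilde g\in B_{\Lip(M)}$. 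Then $g^{*}(\mu)=(1+3\eta')^{-1}$ and $(f-g^{*})(m_{u,v})\geq(1-\eta')+(1+3\eta')^{-1}\geq 2-4\eta'$.

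The main obstacle is parameter bookkeeping: $\eta'$ must simultaneously satisfy $(1+3\eta')^{-1}>1-\alpha$ (so that $g^{*}\in S$) and $4\eta'<\eps$ (so that $\norm{f-g^{*}}_L>2-\eps$). Both conditions are met by choosing $\eta'<\min\{\alpha/(3(1-\alpha)),\,\eps/4\}$, which completes the argument. Apart from the initial reduction to finitely supported $\mu$, which is routine, this is indeed a word-by-word adaptation of \cite[Theorem 3.1]{ikw}, with the universal locality hypothesis replaced by the spreading locality of the single function $f$.
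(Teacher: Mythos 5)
There is a genuine gap, and it is located at the very first step: the reduction of the slice to one determined by a finitely supported element $\mu=\sum_{i=1}^n\lambda_i\delta_{x_i}$ of $\mathcal F(M)$. A Daugavet point of $\Lip(M)$ must be tested against \emph{every} slice of $B_{\Lip(M)}$, i.e.\ against slices $S(\Phi,\alpha)$ with $\Phi$ an arbitrary element of $\Lip(M)^*$. Since $\Lip(M)$ is a nonreflexive dual space, $\mathcal F(M)$ is a proper (non-dense) subspace of $\Lip(M)^*$, so ``density of evaluation functionals in $\mathcal F(M)$'' does not let you replace $\Phi$ by a finitely supported $\mu$. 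What your argument actually proves is that $f$ is a \emph{$w^*$-Daugavet point} in the sense of Definition~\ref{defi:w*daugavetpoint} --- which is precisely the content of the theorem that follows this proposition in the paper, and which holds there under the strictly weaker hypothesis that $f$ is merely local. The paper itself flags this distinction: the stated difficulty with genuine Daugavet points in $\Lip(M)$ is exactly that one cannot decide membership in a slice cut by a general functional, and whether ``local'' suffices for Daugavet points is left open.

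A symptom of the problem is that your proof uses spreading locality only to find a single $\eps$-point outside the finite set $A$; if one $\eps$-point sufficed, locality plus a mild non-accumulation condition would already give the conclusion. The reason the hypothesis asks for \emph{infinitely many} $\eps$-points is that the argument of \cite[Theorem 3.1]{ikw}, which the paper invokes, defeats an arbitrary $\Phi\in S_{\Lip(M)^*}$ by a gliding-hump device: one constructs, at infinitely many distinct $\eps$-points $t_i$, perturbations $h_i$ of a fixed $g\in S(\Phi,\alpha)$ supported in balls $B(t_i,\delta_i)$ with $\norm{h_i}_L\le 2$ and $\norm{h_i}_\infty\le 2\delta_i$. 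Choosing the radii $\delta_i$ small compared with the mutual distances of the $t_i$ (possible after passing to a subsequence of distinct points), the sums $\sum_{i\in F}\pm h_i$ are uniformly bounded in $\Lip(M)$, so $\sum_i|\Phi(h_i)|<\infty$ and hence $\Phi(h_i)\to 0$. One then picks $i$ with $|\Phi(h_i)|$ small enough that the (normalized) perturbed function still lies in $S(\Phi,\alpha)$, while $\norm{f-(g+h_i)}_L$ is close to $2$ because $h_i$ reverses the slope of $g$ on the nearly norming pair $u,v\in B(t_i,\delta_i)$ for $f$. Your local construction of the perturbation and the bookkeeping of the Lipschitz constants are essentially fine and would slot into this scheme, but without the step that controls $\Phi(h_i)$ for a general $\Phi\in\Lip(M)^*$ the proof does not reach the stated conclusion.
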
 

It is natural to wonder whether the previous proposition holds if $f$ is merely a local Lipschitz function. We do not know the answer. Note that, the main difficulty for studying Daugavet points in spaces of Lipschitz functions is that it is not known a good description of weak topology in $\Lip(M)$ (which makes difficult to determine whether or not a Lipschitz function belongs to a given (weak) slice). Because of this reason, we move to study a weak-star-version of the concept of Daugavet point in the following sense.

\begin{definition}\label{defi:w*daugavetpoint}
Let $X$ be a Banach space. An element $x^*\in S_{X^*}$ is said to be a \textit{$w^*$-Daugavet point} if, given a $w^*$-slice $S$ of $B_{X^*}$ and any $\varepsilon>0$, there exists $y^*\in S$ such that $\Vert x^*-y^*\Vert>2-\varepsilon$.
\end{definition}

Apart from being a natural generalisation of the concept of Daugavet point, the previous definition has deep connections with Banach spaces enjoying the Daugavet property thanks to the celebrated work \cite{kssw}. Indeed, \cite[Lemma 2.2]{kssw} says that a Banach space $X$ has the Daugavet property if, and only if, every element of $S_{X^*}$ is a $w^*$-Daugavet point.

The following theorem confirms, for the case of $w^*$ Daugavet point, our initial intuition about local Lipschitz functions. The proof will use ideas coming from \cite[Theorem 2.4]{blr2018}.

\begin{theorem}
Let $M$ be a complete metric space. If $f \in S_{\Lip (M)}$ is local, then $f$ is a $w^*$-Daugavet point. 
\end{theorem}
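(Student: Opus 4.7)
The plan is to adapt the construction of Theorem~\ref{theo:prelipuntoscer} to produce, for an arbitrary $w^*$-slice $S = S(\mu, \alpha)$ of $B_{\Lip(M)}$ and an arbitrary $\varepsilon > 0$, an element $\tilde g \in S$ with $\|f - \tilde g\| > 2 - \varepsilon$. The driving idea is to use locality of $f$ to pick $u \neq v$ with $d(u,v)$ small and $f(m_{u,v}) \approx 1$, and then construct a $1$-Lipschitz $\tilde g$ that almost agrees with a prescribed witness $h_0 \in B_{\Lip(M)}$ on the (finite) support of $\mu$ while arranging $\tilde g(m_{u,v}) \leq -1 + o(1)$. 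This secures simultaneously $\tilde g \in S$ and $(f - \tilde g)(m_{u,v}) \approx 2$.

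Normalize $\|\mu\| = 1$. I would first reduce to the case of a finitely supported $\mu$ by approximating it by $\mu' = \sum_{i=1}^k \lambda_i \delta_{x_i}$ with $\|\mu - \mu'\|$ so small that every element of $B_{\Lip(M)}$ nearly attaining $\|\mu'\|$ on $\mu'$ automatically belongs to $S(\mu, \alpha)$. Set $N = \{x_1, \dots, x_k, 0\}$ and $\theta = \min_{x \neq y \in N} d(x,y) > 0$, and pick $h_0 \in B_{\Lip(M)}$ with $\langle h_0, \mu'\rangle$ very close to $\|\mu'\|$.

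The core construction is that of Theorem~\ref{theo:prelipuntoscer} with the roles of $u$ and $v$ swapped. Using locality, choose $u \neq v$ with $d(u,v) \leq \theta/(2n)$ (for $n$ large, to be fixed) and $f(m_{u,v}) > 1 - \delta$. Define $g$ on $N \cup \{u,v\}$ by $g(x) = h_0(x)$ for $x \in N$ and
\[
g(v) := \inf_{x \in N}\bigl(h_0(x) + \lambda\, d(x,v)\bigr), \qquad g(u) := \sup_{y \in N \cup \{v\}}\bigl(g(y) - \lambda\, d(y,u)\bigr),
\]
with $\lambda = (1 - 1/n)^{-1}$. The same chain of inequalities as in Theorem~\ref{theo:prelipuntoscer} --- based on $(1 - 1/n)(d(x,y) + d(u,v)) \leq d(x,u) + d(y,v)$ for $x \neq y \in N$, which follows from $d(u,v) \leq \theta/(2n)$ --- yields $\|g\|_L \leq \lambda$ on $N \cup \{u,v\}$, while the swap of roles gives $g(v) - g(u) \geq d(u,v)$. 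Extend $g$ to all of $M$ by McShane's theorem preserving the Lipschitz constant, and set $\tilde g := g/\lambda \in B_{\Lip(M)}$.

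It remains to verify that $\tilde g \in S(\mu, \alpha)$ and $\|f - \tilde g\| > 2 - \varepsilon$. Since $\tilde g$ agrees with $h_0/\lambda$ on $N$, one has $\langle \tilde g, \mu'\rangle = \langle h_0, \mu'\rangle/\lambda$, which lies within $O(1/n)$ of $\|\mu'\|$, so the reduction step places $\tilde g$ in $S(\mu, \alpha)$. For the distance, $(f - \tilde g)(m_{u,v}) \geq (1 - \delta) + (1 - 1/n) = 2 - \delta - 1/n$, exceeding $2 - \varepsilon$ once $n$ is large and $\delta$ small. The main obstacle is the simultaneous bookkeeping of the parameters $\alpha, \varepsilon, \delta, n$ and $\|\mu - \mu'\|$ so that both the slice membership and the lower bound on $\|f - \tilde g\|$ hold together; the construction itself is robust to the location of $u, v$ relative to $N$, since it only requires $d(u,v) \ll \theta$, not that $u, v$ stay away from $N$.
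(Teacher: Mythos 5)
Your proof is correct, and it takes a genuinely different route from the paper's. The paper fixes an arbitrary $g$ in the given $w^*$-slice and perturbs it by directly prescribing the value $h(v_{n_J})=g(u_{n_J})-d(u_{n_J},v_{n_J})$; verifying that the resulting function has Lipschitz constant at most $1+\eps$ then forces a case analysis on whether the set $\{u_n\}$ has a cluster point, a further subdivision according to whether $u_{n_J}$ coincides with one of the support points $x_i$, and Taylor-type estimates on the difference quotients. Your construction instead recycles the inf/sup (McShane-type) extension formulas from the proof of Theorem \ref{theo:prelipuntoscer}: because the values at $v$ and $u$ are defined as $\lambda$-Lipschitz upper and lower extensions of $h_0|_N$ with $\lambda=(1-1/n)^{-1}$, the bound $\|g\|_L\leq\lambda$ is automatic and, as you note, completely insensitive to where $u,v$ sit relative to $N$; the only input is $d(u,v)\leq\theta/(2n)$, which gives $(1-1/n)(d(x,y)+d(u,v))\leq d(x,u)+d(y,v)$ for $x\neq y\in N$ and hence $g(v)-g(u)\geq d(u,v)$ exactly as in that theorem (with the roles of $u$ and $v$ swapped, so that $\tilde g(m_{u,v})\leq-(1-1/n)$ while $f(m_{u,v})>1-\delta$, yielding $(f-\tilde g)(m_{u,v})\geq 2-\delta-1/n$). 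The price is the same multiplicative loss $\lambda^{-1}$ on the slice functional that the paper pays with its factor $(1+\eps)^{-1}$, and the reduction to finitely supported $\mu$ is identical in both arguments; including $0$ in $N$ so that the extension vanishes at the base point is a detail the paper glosses over and you handle correctly, and the parameter bookkeeping you flag is routine. Net effect: the underlying mechanism is the same (locality produces molecules $m_{u,v}$ with $d(u,v)\to 0$ on which $f$ has slope near $1$ while the constructed element of the slice has slope near $-1$), but your execution avoids the paper's two-case analysis entirely and is considerably shorter.
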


\begin{proof} 
Let $u_n$ and $v_n$ be distinct points in $M$ such that $d(u_n, v_n) < \frac{1}{n}$ and $f(m_{u_n, v_n}) > 1 - \frac{1}{n}$ for each $n \in \N$. 
Let us denote $A:=  \{ u_n : n \in \N \}$, and let $S = S (\mu, \alpha)$ be a $w^*$-slice of $B_{\Lip (M)}$ with $\mu \in \mathcal{F} (M)$ and $\alpha >0$. Without loss of generality, we may assume that $\mu = \sum_{i=1}^N \beta_i \delta _{x_i}$ with $\beta_i \in \mathbb{R}$ and $x_i \in M \setminus \{0\}$ for each $1 \leq i \leq N$. Fix an element $g \in S$. Our aim is to show that there exists $\tilde{h} \in S$ so that $\| \tilde{h} - f \|$ is sufficiently close to $2$.
We divide the proof into two cases, depending on the set $A$. 

\noindent \textbf{CASE I.} Assume that the set $A'$ of cluster points of $A$ is non-empty, i.e., there exists $p \in M$ which is a limit point of $A$. 
Choose a sequence $(u_{n_j} )_{j \in \N}$ in $A$ so that $u_{n_j}$ converges to $p$. Up to considering the isometric mapping $f\to f-f(p)$, we may assume that $p = 0$. 
Let $\eps \in (0,1)$ be given so that $(1+\eps)^{-1} {g(\mu)} > 1-\alpha.$
Pick $r \in (0, \eps)$ so small that $0 \notin B ( x_i , r)$ for every $1 \leq i \leq N$ and $B ( x_i , r) \cap B (x_j , r) =\emptyset$ for each $ 1 \leq i \neq j \leq N$.
Choose $\eta \in (0, r)$ sufficiently small so that 
\[
 \frac{\eta}{r-\eta} < {\eps} \quad \text{and} \quad \frac{2L}{\left(r-\frac{\eta}{2}\right)^2} \eta < \frac{\eps}{2},  
\]
where $L = \sup \{ |g(x)| :  x \in \{x_i, 1\leq i \leq n\} \cup B (0, \eps) \} > 0$. 
Let $J \in \N$ so large so that 
\[
\frac{1}{n_J} < \frac{\eta}{2} \quad  \text{and} \quad d(u_{n_J}, 0) < \frac{\eta}{2}. 
\]
This choice of $J \in \N$ implies, in particular, that $u_{n_J}$ and $v_{n_J}$ are not one of $x_i$'s. 
Define a function $h$ on $\{x_i : 1 \leq i \leq N\} \cup \{u_{n_J},v_{n_J}\} \subset M$ as $h(x_i) = g(x_i)$ for each $1 \leq i \leq N$, $h(u_{n_J}) = g(u_{n_J})$ and $h(v_{n_J}) = g(u_{n_J}) - d(u_{n_J},v_{n_J})$. Observe that $|h(m_{x_i, x_j})| = |g (m_{x_i, x_j})| \leq \|g \| \leq 1$ for $1 \leq i \neq j \leq N$. Also, note that $|h(m_{u_{n_J}, x_i})| = |g(m_{u_{n_J}, x_i})| \leq \|g \| \leq 1$ for $1 \leq i \leq N$. Moreover, 
\begin{align*} 
\left| \frac{ h(v_{n_J}) - h(x_i) }{ d(v_{n_J}, x_i) } \right| &= \left| \frac{g(u_{n_J})-g(x_i)}{d(v_{n_J},x_i)} - \frac{d(u_{n_J},v_{n_J})}{d(v_{n_J},x_i)} \right| \\
 &\leq \frac{ |g(u_{n_J})-g(x_i)|}{d(u_{n_J},x_i) - {n_J}^{-1}} + \frac{{n_J}^{-1}}{d(v_{n_J}, x_i)} \\
&< \frac{ |g(u_{n_J})-g(x_i)|}{d(u_{n_J},x_i) - \frac{\eta}{2}} + \frac{\eta}{2(r-\eta)}
\end{align*} 
as 
\[
d(v_{n_J}, x_i) \geq d(x_i, 0) - d(v_{n_J}, 0) \geq r - (d (u_{n_J}, 0) + d(u_{n_J}, v_{n_J}) )  > r - \eta. 
\]
Applying Tayor's theorem to $t \in [0,\frac{\eta}{2}] \mapsto \frac{|g(u)-g(x_i)|}{d(u,x_i) - t}$, we have that 
\begin{align*}
\frac{ |g(u_{n_J})-g(x_i)|}{d(u_{n_J},x_i) - \eta} &\leq \frac{ |g(u_{n_J})-g(x_i)|}{d(u_{n_J},x_i)} + \frac{ |g(u_{n_J})-g(x_i)|}{(d(u_{n_J},x_i) - \xi)^2} \eta \quad (\text{for some } \xi \in \left(0, \frac{\eta}{2} \right) ) \\
&\leq \|g \| + \frac{2L}{\left(r-\frac{\eta}{2}\right)^2} \eta < 1 + \frac{\eps}{2}. 
\end{align*} 
It follows that 
\[
\left| \frac{ h(v_{n_J}) - h(x_i) }{ d(v_{n_J}, x_i) } \right| < 1 + \frac{\eps}{2} + \frac{\eps}{2} = 1 +\eps.
\]
So, $h$ is Lipschitz on its domain with Lipschitz constant less than or equal to $1+\eps$. Extend $h$ to a Lipschitz function on $M$ with the same norm, still denoted by $h$. Notice that $\tilde{h}:= (1+\eps)^{-1} h \in B_{\Lip (M)}$ and $\tilde{h} (\mu) = (1+\eps)^{-1} g(\mu) > 1-\alpha$; hence $\tilde{h} \in S$. On the other hand, 
\[
\| f - \tilde{h} \| \geq f(m_{u_{n_J},v_{n_J}}) - \tilde{h}(m_{u_{n_J},v_{n_J}}) >\left( 1 - \frac{1}{{n_J}} \right) + \frac{1}{1+\eps} > 1 - \frac{\eps}{2} + \frac{1}{1+\eps}. 
\]
As $\eps > 0$ can be arbitrarily small, we have that $f$ is a $w^*$-Daugavet point. 

\noindent \textbf{CASE II.} Assume that there is no limit point of $A$ in $M$. Let $\eps \in (0,1)$ be given so that $(1+\eps)^{-1} {g(\mu)} > 1-\alpha.$
Choose $r \in (0,\eps)$ such that $B ( x_i , r) \cap B (x_j , r) =\emptyset$ for each $ 1 \leq i \neq j \leq N$ and $\left(B(x_i,  r) \setminus \{x_i\} \right) \cap A =\emptyset$ for every $1 \leq i \leq N$. 
Choose $\eta \in (0, r)$ so small so that 
\[
\max\left\{ \frac{\eta}{r-\eta}, \frac{2\eta}{r} \right\} < {\eps} \quad \text{and} \quad \frac{2L}{\left(r-\frac{\eta}{2}\right)^2} \eta < \frac{\eps}{2},  
\]
where $L = \sup \{ |g(x)| :  x \in \{x_i, 1\leq i \leq n\} \cup B (0, \eps) \} > 0$.
Let $J \in \N$ so large so that 
\[
\frac{1}{n_J} < \frac{\eta}{2} \quad  \text{and} \quad d(u_{n_J}, 0) < \frac{\eta}{2}. 
\]
Suppose that $u_{n_J} \neq x_i$ for every $1 \leq i \leq N$. Then $d(u_{n_J}, x_i ) \geq r$ and 
\[
d(v_{n_J}, x_i) \geq r - d(u_{n_J}, v_{n_J}) > r - \frac{\eta}{2} > \frac{r}{2}
\] 
for each $1 \leq i \leq N$. 
Define a function $h_1$ on $\{x_i : 1 \leq i \leq n\} \cup \{u_{n_J},v_{n_J}\} \subset M$ as $h_1(x_i) = g(x_i)$ for each $1 \leq i \leq n$, $h_1(u_{n_J}) = g(u_{n_J})$ and $h_1(v_{n_J}) = g(u_{n_J}) - d(u_{n_J},v_{n_J})$. Observe that $|h_1(m_{x_i, x_j})| = |g (m_{x_i, x_j})| \leq \|g \| \leq 1$ for $1 \leq i \neq j \leq N$. Also, note that $|h_1(m_{u_{n_J}, x_i})| = |g(m_{u_{n_J}, x_i})| \leq \|g \| \leq 1$ and that 
\begin{align*} 
\left| \frac{ h_1(v_{n_J}) - h_1 (x_i) }{ d(v_{n_J}, x_i) } \right| &= \left| \frac{g(u_{n_J})-g(x_i)}{d(v_{n_J},x_i)} - \frac{d(u_{n_J},v_{n_J})}{d(v_{n_J},x_i)} \right| \\
 &\leq \frac{ |g(u_{n_J})-g(x_i)|}{d(u_{n_J},x_i) - {n_J}^{-1}} + \frac{{n_J}^{-1}}{d(v_{n_J}, x_i)} \\
&< \frac{ |g(u_{n_J})-g(x_i)|}{d(u_{n_J},x_i) - \frac{\eta}{2}} + \frac{\eta}{r}
\end{align*} 
for $1 \leq i \leq N$. 
On the other hand, if $u_{n_J} = x_i$ for some $1 \leq i \leq N$, without loss of generality, we may assume that $u_{n_J} = x_1$. Note that $v_{n_J} \in B (x_1, \frac{\eta}{2})$; hence $d(v_{n_J}, x_i) \geq r$ for every $2 \leq i \leq N$. Define $h_2$ on $\{x_i : 1 \leq i \leq N \} \cup \{ v_{n_J} \}$ as $h_2 (x_i) = g(x_i)$ for each $1 \leq i \leq N$ and $h_2 (v_{n_J}) = g(x_1) - d(x_1,v_{n_J})$. Note that 
\begin{align*}
\left| \frac{ h_2 (v_{n_J}) - h_2 (x_i) }{ d(v_{n_J}, x_i) } \right| &= \left| \frac{g(x_1)-g(x_i)}{d(v_{n_J},x_i)} - \frac{d(x_1,v_{n_J})}{d(v_{n_J},x_i)} \right| \\
&< \frac{ |g(x_1)-g(x_i)|}{d(x_1,x_i) - \frac{\eta}{2}} + \frac{\eta}{2r}
\end{align*} 
for every $2 \leq i \leq N$. 
In both cases, arguing as before, we can obtain a Lipschitz function $\tilde{h} \in S$ so that $\| f - \tilde{h} \| \geq 2 - C \eps$ for some constant $C$. 
\end{proof}

\section{$\Delta$-points}\label{sect:delta}

In this section we move to study $\Delta$-points in Lipschitz-free spaces. For this, we will take our motivation from the results of \cite{gpr2018}, where a metric characterization of when $\mathcal F(M)$ has the Daugavet property is given in terms of a metric property depending purely on $M$, the property of $M$ being length. Taking a look to the definition of length spaces, we consider the following local concept of length space.

\begin{definition}\label{defi:connect}
Let $M$ be a metric space and $x\neq y\in M$. We say that the points $x$ and $y$ are \textit{connectable} if given $\varepsilon>0$ there exists a $1$-Lipschitz mapping $\alpha:[0,d(x,y)+\varepsilon]\longrightarrow M$ with $\alpha(0)=y$ and $\alpha(d(x,y)+\varepsilon)=x$.
\end{definition}

Notice that a metric space $M$ is length if and only if every pair of distinct points is connectable. This property is equivalent to the fact that for every $x, y \in M$ and for every $\delta >0$ the set 
\[
\text{Mid} (x, y, \delta) := B\left(x, \frac{1+\delta}{2} d(x,y)\right) \cap B \left(y, \frac{1+\delta}{2} d(x,y) \right) 
\]
is non-empty (see \cite[Lemma 3.2]{gpr2018}).

Our interest in this definition is the following result.

\begin{proposition}\label{prop:connectdelta}
Let $M$ be a metric space and let $x\neq y$ be a pair of points in $M$ which are connectable. Then $m_{x,y}$ is a $\Delta$-point.
\end{proposition}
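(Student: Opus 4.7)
The plan is to verify the definition of $\Delta$-point directly: fix a slice $S = S(f,\beta)$ of $B_{\mathcal F(M)}$ with $m_{x,y}\in S$ and $\eta>0$, and exhibit $z\in S$ with $\|m_{x,y}-z\|>2-\eta$. The core idea is that the connecting curve from $y$ to $x$ lets us locate a molecule $m_{u,v}\in S$ whose support points are arbitrarily close in $M$; once that is achieved, Theorem~\ref{theo:prelipuntoscer}, applied with $\mu = m_{x,y}$ and the sequence $m_{v_n,u_n}=-m_{u_n,v_n}$, forces $\|m_{x,y}-m_{u_n,v_n}\|\to 2$, which closes the argument.

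After normalizing so that $f\in S_{\Lip(M)}$ and writing $f(m_{x,y}) = 1-\beta_0$ with $\beta_0<\beta$, I would invoke connectability to choose $\varepsilon>0$ small enough that
\[
\frac{(1-\beta_0)\,d(x,y)}{d(x,y)+\varepsilon} \;>\; 1-\beta,
\]
together with a $1$-Lipschitz curve $\alpha\colon [0,L]\to M$ with $L=d(x,y)+\varepsilon$, $\alpha(0)=y$, $\alpha(L)=x$. Then $g := f\circ\alpha$ is a $1$-Lipschitz function on $[0,L]$ whose total increment equals $(1-\beta_0)\,d(x,y)$.

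For each $n\in\mathbb N$, partitioning $[0,L]$ into $n$ equal subintervals and applying pigeonhole produces a subinterval $[t_n, t_n+L/n]$ on which $g$ grows by at least $(1-\beta_0)d(x,y)/n$. Setting $v_n := \alpha(t_n)$ and $u_n := \alpha(t_n+L/n)$, the $1$-Lipschitzness of $\alpha$ gives $d(u_n,v_n)\leq L/n$, while strict positivity of the numerator ensures $u_n\neq v_n$, so
\[
f(m_{u_n,v_n}) \;\geq\; \frac{(1-\beta_0)\,d(x,y)/n}{L/n} \;=\; \frac{(1-\beta_0)\,d(x,y)}{d(x,y)+\varepsilon} \;>\; 1-\beta.
\]
Hence $m_{u_n,v_n}\in S$ while $d(u_n,v_n)\to 0$, and Theorem~\ref{theo:prelipuntoscer} then delivers $\|m_{x,y}-m_{u_n,v_n}\|>2-\eta$ for $n$ sufficiently large.

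The only delicate point I anticipate is the calibration of the slack $\varepsilon$ in the curve length against the slice width: pigeonhole only guarantees average rate $(1-\beta_0)d(x,y)/L$ on some subinterval, so $L$ must be chosen close enough to $d(x,y)$ that this rate still surpasses $1-\beta$. This is precisely where the fact that $m_{x,y}$ lies in the \emph{open} slice $S(f,\beta)$ (equivalently, $\beta_0<\beta$ strictly) is essential; everything else is bookkeeping.
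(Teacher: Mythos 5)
Your proof is correct and follows essentially the same route as the paper: compose $f$ with the connecting curve, extract molecules $m_{u_n,v_n}\in S$ with $0<d(u_n,v_n)\to 0$, and conclude via Theorem~\ref{theo:prelipuntoscer} (with the sign handled, as you note, by $-m_{u_n,v_n}=m_{v_n,u_n}$). The only difference is that you locate the short subarc of large slope by a pigeonhole/averaging argument on equal subintervals, whereas the paper uses a.e.\ differentiability of the $1$-Lipschitz map $f\circ\alpha$ and the definition of the derivative; your variant is slightly more elementary but is the same idea.
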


\begin{proof}
Assume with no loss of generality that $d(x,y)=1$. Pick a slice $S=S(f,\alpha)$ containing $m_{x,y}$. Let us find $u\neq v$ such that $m_{u,v}\in S$ and $\Vert m_{x,y}-m_{u,v}\Vert\approx 2$. To this end, find $0<\beta<\alpha$ such that $f(m_{x,y})>1-\beta$ and take $\eta>0$ with $\frac{1-\beta}{1+\eta}>1-\alpha$ and take $\alpha:[0,1+\eta]\longrightarrow M$ to be a $1$-Lipschitz curve such that $\alpha(0)=y$ and $\alpha(1+\eta)=x$. Hence $f\circ \alpha:[0,1+\varepsilon]\longrightarrow \mathbb R$ is a $1$-Lipschitz map, so it is differentiable almost everywhere. Moreover
$$1-\beta<f(\alpha(1+\eta))-f(0)=\int_0^{1+\eta} (f\circ \alpha)'\leq (1+\eta)\Vert (f\circ \alpha)'\Vert_\infty,$$
so there exists $t_0\in [0,1+\eta]$ such that $(f\circ \alpha)'(t_0)>\frac{1-\beta}{1+\eta}>1-\alpha$.
Now pick $\varepsilon>0$. By the definition of derivative and the previous condition we can find $t\in [0,1+\varepsilon]$ with $0<d(t,t_0)<\varepsilon$ and such that $\frac{f(\alpha(t))-f(\alpha(t_0))}{t-t_0}>1-\alpha$. Now
\[
\begin{split}
1-\alpha<\frac{f(\alpha(t))-f(\alpha(t_0))}{t-t_0}& =\frac{f(\alpha(t))-f(\alpha(t_0))}{d(\alpha(t),\alpha(t_0))}\frac{d(\alpha(t),\alpha(t_0))}{t-t_0}\\
& \leq f(m_{\alpha(t),\alpha(t_0)})\Vert \alpha\Vert_L\leq f(m_{\alpha(t),\alpha(t_0)}),
\end{split}
\]
which implies that $m_{\alpha(t),\alpha(t_0)}\in S$. Moreover $d(\alpha(t),\alpha(t_0))<\varepsilon$. Summarising we have proved that we can find molecules in $S$ where the defining points are arbitrarily close. By Theorem \ref{theo:prelipuntoscer} we get that $m_{x,y}$ is a $\Delta$-point.
\end{proof}

Next we aim to get conditions under which connectibility is equivalent to the fact that the corresponding molecule is a $\Delta$-point. However let us obtain first that, from the previous proposition and from Theorem \ref{theo:charadaugapoint}, there are $\Delta$-points that are not Daugavet points in the context of Lipschitz-free spaces. This establishes a big difference between the class of Lipschitz-free spaces and $L_1$-spaces, where every $\Delta-$point is a Daugavet point \cite[Theorem 3.1]{ahlp}.

\begin{example}\label{examp:deltanotdauga}
Let $0<r<1$ and define $M:=[0,1]\times \{0\}\cup\{(0,r),(1,r)\}\subseteq (\mathbb R^2,\Vert\cdot\Vert_2)$ and consider $x:=(1,0)$ and $y:=(0,0)$. Notice that $m_{x,y}$ is a $\Delta$-point because there exists an isometry $\alpha:[0,1]\longrightarrow M$ connecting $x$ and $y$ (namely $\alpha(t):=(t,0)$ for every $t\in [0,1]$). However, it is not a Daugavet point. To this end, pick $u:=(1,r)$ and $v:=(0,r)$, and notice that the set $A:=\{z\in M: d(u,z)+d(v,z)=d(u,v)\}=\{u,v\}$. In fact, given $z\in M\setminus \{u,v\}$ we get that $z=(t,0)$ for certain $t\in [0,1]$. Hence
$$d(u,z)+d(v,z)=\sqrt{t^2+r^2}+\sqrt{(1-t)^2+r^2}>t+(1-t)=1=d(u,v),$$
which proves that $A=\{u,v\}$. Moreover
$$d(x,u)+d(y,v)=2r<2=d(x,y)+d(u,v),$$
so by (3) in Theorem \ref{theo:charadaugapoint} we get that $m_{x,y}$ is not a Daugavet point.
\end{example}

\begin{remark}\label{rema:deltanodaugaeasy}
The results of \cite[Section 3]{ahlp} show that, in many classical Banach spaces, the concept of $\Delta$- and Daugavet point coincide. The first example of a $\Delta$-point which is not a Daugavet point \cite[Example 4.7]{ahlp} required a study of absolute normalised norms (which was pushed quite further in \cite{hpv}). See also \cite{almt} for more technical examples of Banach spaces containing $\Delta$-points which are not Daugavet points.

Theorem \ref{theo:charadaugapoint} together with Proposition \ref{prop:connectdelta} provides easy procedures to obtain metric spaces whose Lipschitz-free space has $\Delta$-points which are not Daugavet points.
\end{remark}

Our aim is now to get necessary conditions for a molecule $m_{x,y}$ to be a $\Delta$-point. We begin with the following preliminary lemma. 

\begin{lemma}\label{lemma:lemanecdeltaclose}
Let $x\neq y\in M$ be two points and $f \in S_{\Lip (M)}$ be given. If $m_{x,y}$ is a $\Delta$-point, then given a slice $S(f, \eps)$ of $B_{\mathcal{F} (M)}$ with $m_{x,y} \in S(f, \eps)$, there exist $u\neq v\in M$ with $f(m_{u,v})>1-\frac{\varepsilon}{2}$ and such that $d(u,v)<\frac{2\varepsilon}{(1-\varepsilon)^2}d(x,y)$.
\end{lemma}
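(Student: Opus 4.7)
The plan is to combine the $\Delta$-point property of $m_{x,y}$, realized through the nested slice form in Lemma~\ref{lem:delta_points_nested_slices}, with the dual description of the norm in $\mathcal{F}(M)$, to locate a molecule inside $S(f,\varepsilon)$ that is simultaneously nearly-antipodal to $m_{x,y}$, and then read off the metric bound on $d(u,v)$ from this antipodality. Concretely, I would fix a small auxiliary parameter $\eta>0$ (to be determined in terms of $\varepsilon$) and apply Lemma~\ref{lem:delta_points_nested_slices} to $S(f,\varepsilon)$ to obtain a subslice $T\subseteq S(f,\varepsilon)$ each of whose elements is at distance $>2-\eta$ from $m_{x,y}$. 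Because $\overline{\co}\,\{m_{u,v}:u\ne v\}=B_{\mathcal{F}(M)}$, every non-empty slice contains a molecule (indeed $\|g\|_L=\sup_{u\ne v} g(m_{u,v})$ for the functional $g$ defining the slice), so I pick $m_{u,v}\in T$; then automatically $f(m_{u,v})>1-\varepsilon$ and $\|m_{x,y}-m_{u,v}\|>2-\eta$.

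Next I would convert the norm-antipodality into metric information via the dual pairing. The inequality $\|m_{x,y}-m_{u,v}\|>2-\eta$ produces $h\in B_{\Lip(M)}$ with $h(m_{x,y})-h(m_{u,v})>2-\eta$, forcing $h(m_{x,y})>1-\eta$ and $h(m_{u,v})<-1+\eta$; combined with the $1$-Lipschitz property of $h$ on the pairs $(x,u)$ and $(v,y)$ this gives the quadrilateral inequality $d(x,u)+d(v,y)>(1-\eta)\bigl(d(x,y)+d(u,v)\bigr)$. The analogous argument using $f$ on the pairs $(x,v)$ and $(u,y)$ (with $f(m_{x,y}),f(m_{u,v})>1-\varepsilon$) yields $d(x,v)+d(u,y)>(1-\varepsilon)\bigl(d(x,y)+d(u,v)\bigr)$. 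Combining these two quadrilateral inequalities with the triangle inequalities among the six pairwise distances of $\{x,y,u,v\}$ and solving for $d(u,v)$ should produce a bound of the shape $d(u,v)\le C(\varepsilon,\eta)\,d(x,y)$, and a judicious choice of $\eta$ small relative to $\varepsilon$ will arrange $C(\varepsilon,\eta)\le\frac{2\varepsilon}{(1-\varepsilon)^2}$.

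The main obstacle is producing the exact numerical constants in the statement. The algebraic step of extracting $\frac{2\varepsilon}{(1-\varepsilon)^2}$ from the four-point inequalities is delicate, and more importantly, strengthening the slice-bound $f(m_{u,v})>1-\varepsilon$ to the sharper $f(m_{u,v})>1-\varepsilon/2$ does not follow from slice membership alone. This refinement appears to require either passing to a smaller auxiliary slice of $f$ containing $m_{x,y}$ (for instance, by engineering a functional to which Lemma~\ref{lem:delta_points_nested_slices} is reapplied), or leveraging the near-antipodality together with $\|f\|_L=1$ to push $f(m_{u,v})$ close to $1$ beyond what the original slice gives. Getting both thresholds to line up numerically with the stated constants is the technical crux of the proof.
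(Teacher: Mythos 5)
Your first two steps are sound and in fact coincide with what the paper does: apply Lemma~\ref{lem:delta_points_nested_slices} to a slice containing $m_{x,y}$, pick a molecule $m_{u,v}$ in the resulting subslice, and extract from the antipodality witness $h$ and from $f$ the two ``quadrilateral'' lower bounds $(1-\eta)\bigl(d(x,y)+d(u,v)\bigr)<d(x,u)+d(y,v)$ and $(1-\varepsilon)\bigl(d(x,y)+d(u,v)\bigr)<d(x,v)+d(y,u)$. The gap is in your final step: these two inequalities, together with all triangle inequalities among $x,y,u,v$, do \emph{not} imply any upper bound on $d(u,v)$. Concretely, take four points with all six pairwise distances equal to $1$, let $f(x)=f(u)=1$, $f(y)=f(v)=0$ and $h(x)=h(v)=1$, $h(y)=h(u)=0$; both functions are $1$-Lipschitz, $f(m_{x,y})=f(m_{u,v})=1$ and $h(m_{x,y})-h(m_{u,v})=2$, so $\Vert m_{x,y}-m_{u,v}\Vert=2$, and yet $d(u,v)=d(x,y)$. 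Thus everything you know about $m_{u,v}$ (membership in the slice of $f$ plus near-antipodality to $m_{x,y}$) is consistent with $d(u,v)=d(x,y)$, and no algebra on the six distances can yield $d(u,v)<\frac{2\varepsilon}{(1-\varepsilon)^2}d(x,y)$. Your two inequalities are lower bounds on the cross-sums $d(x,u)+d(y,v)$ and $d(x,v)+d(y,u)$; to cap $d(u,v)$ you need matching \emph{upper} bounds, i.e.\ you need to know that $u$ and $v$ lie close to the metric segment $[x,y]$, and nothing in your setup forces that.

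The missing idea --- which you brush against when you propose ``engineering a functional to which Lemma~\ref{lem:delta_points_nested_slices} is reapplied'' --- is the paper's averaging trick. One introduces $f_{xy}(t)=\frac{d(x,y)}{2}\,\frac{d(t,y)-d(t,x)}{d(t,y)+d(t,x)}$, which has Lipschitz norm at most one and satisfies $f_{xy}(m_{x,y})=1$, and applies the nested-slice lemma not to $S(f,\varepsilon)$ but to a slice determined by $g=\frac{f+f_{xy}}{2}$ and containing $m_{x,y}$. A molecule $m_{u,v}$ found in the resulting subslice then has both $f(m_{u,v})$ and $f_{xy}(m_{u,v})$ close to $1$, and the latter gives, via \cite[Lemma~3.6]{gpr2018}, the localization $(1-\varepsilon)\max\{d(x,u)+d(y,u),\,d(x,v)+d(y,v)\}<d(x,y)$. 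Adding your two lower bounds and inserting this upper bound yields $(1-\varepsilon)^2\bigl(d(x,y)+d(u,v)\bigr)<d(x,y)$, which is exactly $d(u,v)<\frac{2\varepsilon}{(1-\varepsilon)^2}\,d(x,y)$. So the constant is not the real ``technical crux'': it falls out in one line once the correct auxiliary functional is in place, whereas without that functional the argument cannot be closed at all.
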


\begin{proof}
Let us follow the proof of \cite[Lemma 3.7]{gpr2018}. To this end, define $g:=\frac{f+f_{xy}}{2}$, where $f_{x,y}$ is defined by the equation
\[f_{xy}(t):= \frac{d(x,y)}{2}\frac{d(t,y)-d(t,x)}{d(t,y)+d(t,x)}.\]

Since $f_{x,y}(m_{x,y})=1$, then $g$ satisfies that $\Vert g\Vert\geq g(m_{x,y})>1-\frac{\eps}{2}$. Since $m_{x,y}$ is a $\Delta$-point, by Lemma \ref{lem:delta_points_nested_slices}, there exists a slice $S(h, \eta)$ of $B_{\mathcal{F}(M)}$ such that $S(h, \eta) \subset S(g, 1-\frac{\eps}{2})$ and $\| m_{x,y} -z \| \geq 2 - \frac{\eps}{2}$ for every $z \in S(h, \eta)$. Pick $u \neq v \in M$ such that $m_{u,v} \in S(h,\eta)$. Then, in particular, 
$$\Vert m_{x,y}-m_{u,v}\Vert>2-\varepsilon.$$
On the one hand, note that $f(m_{u,v})>1- \frac{\varepsilon}{2}$ and $f_{xy}(m_{u,v})>1- \frac{\varepsilon}{2}$. In particular, $f_{xy}(m_{u,v})>1- \varepsilon$, so by \cite[Lemma 3.6]{gpr2018} we get that
$$(1-\varepsilon)\max\{d(x,u)+d(y,u), d(x,v)+d(y,v)\}<d(x,y).$$
To obtain the desired conclusion, we will prove that 
$$(1-\varepsilon)(d(x,y)+d(u,v))\leq \min\{d(x,u)+d(y,v), d(x,v)+d(y,u)\}.$$
Notice that $\Vert m_{x,y}+m_{v,u}\Vert>2-\varepsilon$ implies that there exists $g\in S_{\Lip(M)}$ such that $g(x)-g(y)>(1-\varepsilon)d(x,y)$ and $g(v)-g(u)>(1-\varepsilon)d(u,v)$. Hence
\[
\begin{split}
1\geq \frac{g(x)-g(u)}{d(x,u)}& =\frac{g(x)-g(y)+g(v)-g(u)+g(y)-g(v)}{d(x,u)}\\
& \geq \frac{(1-\varepsilon)(d(x,y)+d(u,v))-d(y,v)}{d(x,u)},
\end{split}\]
from where $(1-\varepsilon)(d(x,y)+d(u,v))\leq d(x,u)+d(y,v)$. Using the same argument taking into account that $f(x)-f(y)>(1-\varepsilon)d(x,y)$ and $f(u)-f(v)>(1-\varepsilon)d(u,v)$ we get that $(1-\varepsilon)(d(x,y)+d(u,v))\leq d(x,v)+d(y,u)$, as desired.

With the previous inequalities in mind, the conclusion of the lemma follows with the estimates from \cite[Lemma 3.7]{gpr2018}.\end{proof}

From the previous lemma and \cite[Lemma 1.4]{ik2004}, we obtain the following characterization of the molecules which are $\Delta$-points in a Lipschitz-free space.

\begin{theorem}\label{prop:deltapointlocal}
Let $x\neq y\in M$ be two points. Then $m_{x,y}$ is a $\Delta$-point if and only if for every slice $S=S(f,\alpha)$ containing $m_{x,y}$ with $\alpha<1$ and for every $\varepsilon>0$, there exist $u,v\in M$ with $0<d(u,v)<\varepsilon$ such that $m_{u,v}\in S$.
\end{theorem}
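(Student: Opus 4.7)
The plan splits cleanly into the two implications, with all the real work already done by Lemma \ref{lemma:lemanecdeltaclose}, by the slice-refinement tool \cite[Lemma 1.4]{ik2004} (the same one used in Lemma \ref{lem:delta_points_nested_slices}), and by Theorem \ref{theo:prelipuntoscer}.

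For the \emph{only if} direction, I start from a slice $S(f,\alpha)$ with $\alpha<1$ and $m_{x,y}\in S(f,\alpha)$, together with a prescribed tolerance $\varepsilon>0$. I choose $\eta\in (0,\alpha)$ small enough that
\[
\frac{2\eta}{(1-\eta)^2}\, d(x,y) \;<\; \varepsilon.
\]
By \cite[Lemma 1.4]{ik2004} I can find $h\in S_{\Lip(M)}$ with $m_{x,y}\in S(h,\eta)\subseteq S(f,\alpha)$. Now I apply Lemma \ref{lemma:lemanecdeltaclose} to $h$ and the slice $S(h,\eta)$: since $m_{x,y}$ is a $\Delta$-point, this produces $u\neq v\in M$ with
\[
h(m_{u,v}) \;>\; 1-\tfrac{\eta}{2} \qquad \text{and} \qquad d(u,v)\;<\;\tfrac{2\eta}{(1-\eta)^2}d(x,y)\;<\;\varepsilon.
\]
The first inequality guarantees $m_{u,v}\in S(h,\eta)\subseteq S(f,\alpha)$, so both requirements of the conclusion hold.

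For the \emph{if} direction, I assume the local hypothesis and fix an arbitrary slice $S$ of $B_{\mathcal F(M)}$ with $m_{x,y}\in S$ and an arbitrary $\varepsilon>0$; I must exhibit $z\in S$ with $\|m_{x,y}-z\|>2-\varepsilon$. Using \cite[Lemma 1.4]{ik2004} once more, I shrink $S$ to a slice $S(h,\eta)\subseteq S$ with $\eta<1$ still containing $m_{x,y}$, so that the hypothesis now applies. For each $n\in \N$ the hypothesis applied to $S(h,\eta)$ with tolerance $\tfrac{1}{n}$ yields points $u_n\neq v_n\in M$ with $0<d(u_n,v_n)<\tfrac{1}{n}$ and $m_{u_n,v_n}\in S(h,\eta)\subseteq S$. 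Then Theorem \ref{theo:prelipuntoscer} applied to the element $\mu:=-m_{x,y}\in S_{\mathcal F(M)}$ and to the sequence $(u_n,v_n)$ gives
\[
\|m_{u_n,v_n}-m_{x,y}\|\;=\;\|{-m_{x,y}}+m_{u_n,v_n}\|\;\longrightarrow\;2,
\]
so for all sufficiently large $n$ the point $z:=m_{u_n,v_n}$ lies in $S$ and satisfies $\|m_{x,y}-z\|>2-\varepsilon$. This is exactly the $\Delta$-point condition.

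I expect no real obstacle in the argument itself: the restriction $\alpha<1$ in the statement is essentially cosmetic and is absorbed by the slice-refinement step in both directions, and the backward direction is a direct exploitation of Theorem \ref{theo:prelipuntoscer} via the trick of testing with $\mu=-m_{x,y}$. The only substantial ingredient is Lemma \ref{lemma:lemanecdeltaclose}, which has already been proved using the peak function $f_{x,y}$ together with the estimates of \cite[Lemma 3.7]{gpr2018}; once that lemma and the two-slice refinement tool are in hand, the characterization follows in a few lines.
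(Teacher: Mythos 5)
Your proposal is correct and follows essentially the same route as the paper: the forward direction is the combination of \cite[Lemma 1.4]{ik2004} with Lemma \ref{lemma:lemanecdeltaclose} and the choice $\frac{2\eta}{(1-\eta)^2}d(x,y)<\varepsilon$, and the converse is the extraction of a sequence $(u_n,v_n)$ with $d(u_n,v_n)\to 0$ fed into Theorem \ref{theo:prelipuntoscer} applied to $-m_{x,y}$. The only (welcome) difference is that you spell out the slice-refinement step in the converse direction to reduce an arbitrary slice to one with parameter less than $1$, a detail the paper leaves implicit.
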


\begin{proof}
Suppose that $m_{x,y}$ is a $\Delta$-point. Pick $\varepsilon>0$ and let $0<\beta<\alpha$ such that $\frac{2\beta}{(1-\beta)^2}d(x,y)<\varepsilon$. By \cite[Lemma 1.4]{ik2004} we get the existence of $g\in S_{\Lip(M)}$ such that
$$m_{x,y}\in S(g,\beta)\subseteq S.$$
Since $g(m_{x,y})>1-\beta$, by Lemma \ref{lemma:lemanecdeltaclose} we get that there are $u\neq v\in M$ such that $g(m_{u,v})>1-\beta$ (and so $m_{u,v}\in S$) and such that
$$d(u,v)<\frac{2\beta}{(1-\beta)^2}d(x,y)<\varepsilon,$$
as desired. 

Conversely, let $S = S(f, \alpha)$ be a slice containing $m_{x,y}$ with $\alpha < 1$ and $\eps >0$ be given. Using the existence of such $u, v \in M$ repeatedly, we may find a sequence $(u_n, v_n)$ of points with $m_{u_n, v_n} \in S$ such that $0 < d(u_n, v_n) \rightarrow 0$. By Theorem \ref{theo:prelipuntoscer}, we can conclude that  $m_{x,y}$ is a $\Delta$-point. 
\end{proof}

Even though the previous is a complete characterization of when a given molecule $m_{x,y}$ in $\mathcal F(M)$ is a $\Delta$-point, we would want to obtain a condition which purely depend on the metric space $M$. In order to do so, let us obtain the following consequence of Proposition \ref{prop:deltapointlocal}.

\begin{corollary}\label{coro:interballs}
Let $M$ be a complete metric space and let $m_{x,y}$ a $\Delta$-point. Pick $0<r<d(x,y)$. Then, for every $\varepsilon>0$, we get that
$$B(x,r+\varepsilon)\cap B (y,d(x,y)-r+\varepsilon)\neq \emptyset.$$
In particular, if $M$ is compact, $S (x, r) \cap S (y, d(x,y) -r ) \neq \emptyset$. 
\end{corollary}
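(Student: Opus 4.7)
My plan is to apply Theorem~\ref{prop:deltapointlocal} to a slice $S(\widetilde f,\alpha)$ defined by a specifically tailored $1$-Lipschitz function $\widetilde f$, so that every molecule $m_{u,v}$ lying in that slice with $d(u,v)$ small is forced to have $u$ (and $v$) inside the intersection $B(x,r+\varepsilon)\cap B(y,d(x,y)-r+\varepsilon)$. After rescaling so that $d(x,y)=1$, I would build $\widetilde f$ out of two ingredients. First, fix small parameters $\eta,\delta>0$ and let $\varphi:[0,1]\to\R$ be the piecewise affine $1$-Lipschitz function with $\varphi(0)=0$, slope $1$ on the narrow window $(1-r-\delta,1-r+\delta)$ around $1-r$, and slope $1-\eta$ on the complement; set $h(t):=\varphi(d(y,t))$, so that $h$ achieves maximal slope only for pairs whose distance to $y$ is close to $1-r$. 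Second, let $f_{xy}$ be the peaking function already used in the proof of Lemma~\ref{lemma:lemanecdeltaclose}, which satisfies $f_{xy}(m_{x,y})=1$ and, via Lemma~3.6 of \cite{gpr2018}, has the property that $f_{xy}(m_{u,v})>1-\varrho$ forces $(1-\varrho)(d(x,u)+d(y,u))\leq 1$. I combine them as $\widetilde f:=\tfrac12(h+f_{xy})$.

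A direct computation gives $\widetilde f(m_{x,y})=1-\eta(1-2\delta)/2$, so for $\alpha:=\eta(1-\delta)/2$ the slice $S(\widetilde f,\alpha)$ contains $m_{x,y}$. Theorem~\ref{prop:deltapointlocal} then produces, for every $\varepsilon'>0$, points $u,v\in M$ with $0<d(u,v)<\varepsilon'$ and $\widetilde f(m_{u,v})>1-\alpha$; since $h$ and $f_{xy}$ are both bounded above by $1$ on molecules, this forces $h(m_{u,v})>1-\eta(1-\delta)$ and $f_{xy}(m_{u,v})>1-\eta(1-\delta)$ simultaneously. The first inequality, combined with the slope structure of $\varphi$, excludes the possibility that $d(y,u)$ and $d(y,v)$ both lie outside the window on the same side, since then $h(m_{u,v})\leq 1-\eta<1-\eta(1-\delta)$; imposing $\varepsilon'<2\delta$ in turn rules out straddling the whole window, because that would need $|d(y,u)-d(y,v)|\geq 2\delta$. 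Hence both $d(y,u),d(y,v)\in(1-r-\delta-\varepsilon',\,1-r+\delta+\varepsilon')$.

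The $f_{xy}$ estimate then gives $d(x,u)+d(y,u)\leq 1/(1-\eta(1-\delta))$, so combining with the previous line yields $d(y,u)\leq 1-r+\delta+\varepsilon'$ and $d(x,u)\leq r+\delta+\varepsilon'+\eta(1-\delta)/(1-\eta(1-\delta))$. Choosing $\eta,\delta,\varepsilon'$ all sufficiently small in terms of $\varepsilon$ (with $\varepsilon'<2\delta$) places $u$ in the required intersection, proving the first assertion. For the ``in particular'' clause under compactness, I would apply the above to a sequence $\varepsilon_n\downarrow 0$ and extract a convergent subsequence of the witnesses $u_n\to z\in M$; the bounds $d(x,z)\leq r$ and $d(y,z)\leq d(x,y)-r$ together with the triangle inequality $d(x,z)+d(y,z)\geq d(x,y)$ force equality in both, so $z\in S(x,r)\cap S(y,d(x,y)-r)$.

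The main obstacle is the simultaneous parameter balancing. The slice must be narrow enough (small $\alpha$) that the slope gap $\eta$ actually excludes the ``both outside the window'' configuration via the inequality $1-\eta<1-2\alpha$, yet wide enough to clear the containment threshold $1-\widetilde f(m_{x,y})=\eta(1-2\delta)/2$. The hierarchy $\varepsilon'<2\delta\ll\eta\ll\varepsilon$ is what makes the slope exclusion, the near-segment estimate coming from $f_{xy}$, and the final bound on $d(x,u)$ all fit inside the $\varepsilon$-tolerance demanded by the statement.
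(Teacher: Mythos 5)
Your overall strategy --- feeding a tailored norm-one Lipschitz function into Theorem \ref{prop:deltapointlocal} and forcing the resulting close pairs into the ball intersection --- is genuinely different from the paper's, but the crucial quantitative step does not close as written. The deduction that every $m_{u,v}\in S(\widetilde f,\alpha)$ with $\alpha=\eta(1-\delta)/2$ satisfies $h(m_{u,v})>1-\eta(1-\delta)$ silently assumes $\sup_{B_{\mathcal{F}(M)}}\widetilde f=1$. All you actually know is $\|\widetilde f\|\geq \widetilde f(m_{x,y})=1-\eta(1-2\delta)/2$, so membership in the slice only yields $\widetilde f(m_{u,v})>1-\eta(1-2\delta)/2-\eta(1-\delta)/2$ and hence $h(m_{u,v})>1-\eta(2-3\delta)$. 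For $\delta<1/3$ this falls below the threshold $1-\eta$ that your slope-gap argument needs in order to exclude pairs lying entirely in the flat region of $\varphi$, so the window localization --- the heart of the proof --- fails. The underlying difficulty is that $h$ does not peak at $m_{x,y}$: it loses $\eta(1-2\delta)$ there, so a slice of $\widetilde f$ containing $m_{x,y}$ may have to be roughly $\eta/2$ wide, which is exactly the size of the slope gap you are trying to exploit. The argument is repairable, but only by choosing $\alpha$ in terms of the actual value $\gamma:=1-\|\widetilde f\|$ (one checks that the interval $\left(\eta(1-2\delta)/2-\gamma,\ \eta/2-\gamma\right)$ of admissible $\alpha$ is always nonempty); this adaptation is a genuinely missing idea, not a routine omission. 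A smaller point: $\varphi$ must be defined on all of $[0,\infty)$, since $d(y,\cdot)$ need not be bounded by $d(x,y)$.

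For comparison, the paper sidesteps the parameter balancing entirely by arguing contrapositively: if $B(x,r+\varepsilon_0)\cap B(y,1-r+\varepsilon_0)=\emptyset$, these balls are at positive distance $\delta_0$, and one builds two bump functions $f_1,f_2$ of Lipschitz norm at most $\frac{1}{1+\varepsilon_0}$ supported in the respective balls with $(f_1+f_2)(m_{x,y})=1$. Any molecule on which $f_1+f_2$ exceeds $\frac{1}{1+\varepsilon_0}$ must then have one point in each support, hence $d(u,v)\geq\delta_0$, which contradicts Theorem \ref{prop:deltapointlocal}. The emptiness hypothesis hands over the separation constant for free, so no hierarchy of small parameters is needed. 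Your treatment of the ``in particular'' clause in the compact case is correct.
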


\begin{proof}
The proof follows the lines of (iii)$\Rightarrow$(i) of \cite[Lemma 3.4]{gpr2018}. Assume with no loss of generality that $d(x,y)=1$. Assume that there exists $0<r<1$ and $\varepsilon_0>0$ such that
$$B(x,r+\varepsilon_0)\cap B(y,1-r+\varepsilon_0)=\emptyset,$$
and let us prove that $m_{x,y}$ is not a $\Delta$-point. Notice that we can assume that $d(B(x,r+\varepsilon_0), B(y,1-r+\varepsilon_0))\geq \delta_0>0$. Now define $f_i:M\longrightarrow \mathbb R, i=1,2$ by the equations
\[
\begin{split}
f_1(t):=\max \left\{r-\frac{1}{1+\varepsilon_0}d(x,t),0\right\},\\
f_2(t):=\min \left\{-(1-r)+\frac{1}{1+\varepsilon_0}d(y,t),0\right\}.
\end{split}
\]
Notice that $\Vert f_i\Vert_L\leq \frac{1}{1+\varepsilon_0}$ since Lipschitz norm does not increase under taking maxima and minima of Lipschitz functions \cite[Proposition 1.32]{Weaver2}. Define $f:=f_1+f_2$, which is a Lipschitz function. Also $f(x)-f(y)=1=d(x,y)$, so $\Vert f\Vert\geq 1$. It is clear from the construction that $\{z\in M: f_1(z)\neq 0\}\subseteq B(x,r+\varepsilon_0)$ and $\{z\in M: f_2(z)\neq 0\}\subseteq B(y,1-r+\varepsilon_0)$. Define the slice
$$S:=\left\{\mu\in B_{\mathcal F(M)}: f(\mu)>\frac{1}{1+\varepsilon_0}\right\}.$$
Notice that if $m_{u,v}\in S$ then, up re-labeling the points $u$ and $v$, it follows that $u\in B(x,r+\varepsilon_0)$ and $v\in B(y,1-r+\varepsilon_0)$ (because otherwise $f(m_{u,v})\leq \max\{\Vert f_1\Vert_L,\Vert f_2\Vert_L\}\leq \frac{1}{1+\varepsilon_0}$). This implies that $d(u,v)\geq \delta_0$. By Proposition \ref{prop:deltapointlocal} we get that $m_{x,y}$ is not a $\Delta$-point, as desired.
\end{proof}

\begin{remark}\label{remark:abundeltapoint}
Let $M$ be a complete metric space. By combining Corollary \ref{coro:interballs} with \cite[Lemma 3.2]{gpr2018}, we have that if $m_{x,y}$ is a $\Delta$-point for every $x \neq y \in M$, then $M$ is a length space. Now, we have the following equivalent statements:
\begin{enumerate}
\item $M$ is a length space;
\item $m_{x,y}$ is a $\Delta$-point for every $x \neq y \in M$; 
\item $\mathcal{F} (M)$ has the Daugavet property. 
\end{enumerate} 
\end{remark} 
For the proof of $(1) \Leftrightarrow (3)$, see \cite[Theorem 3.5]{gpr2018}.

\begin{remark}\label{remark:convexdld2p}
According to \cite[Definition 5.1]{ahlp}, a Banach space $X$ is said to have the \textit{convex-DLD2P} if $B_X=\overline{\co}(\Delta)$, where $\Delta$ is the set of $\Delta$-points of $B_X$.

In general, the convex-DLD2P does not imply that every element of the unit sphere is a $\Delta$-point \cite[Corollary 5.6]{ahlp}. However, Remark \ref{remark:abundeltapoint} shows that, if $\Delta$ contains the set $\{m_{x,y}: x\neq y\in M\}$ (in particular, $\mathcal F(M)$ would trivially have the convex-DLD2P) then $\mathcal F(M)$ even enjoys the Daugavet property.
\end{remark}

As we have pointed out above, a (complete) metric space $M$ is length if, and only if, for every pair of points $x,y\in M$ and every $\varepsilon>0$ the following holds
$$B\left(x,\frac{d(x,y)}{2}+\varepsilon\right)\cap B\left(y,\frac{d(x,y)}{2}+\varepsilon\right)\neq \emptyset.$$
This motivates that a local version could be true; in other words, that the converse of Corollary \ref{coro:interballs} hold. However, the following example, due to Luis Garc\'ia-Lirola, shows that this is not the case.

\begin{example}\label{example:luisca}
Let $M:= \{ 0, 1, x_t : t \in [0,1] \} \subset (\mathbb{R}, d)$ with the metric $d(x_t, x_s) = \min \{ t + s , 2-t-s \}$, where $x_0 = 0$ and $x_1 = 1$. Then $M$ is complete. It is clear that $B(0,r ) \cap B(1, 1-r) = \{x_r\} \neq \emptyset$ for every $0< r<1$. However, $m_{0,1}$ is not a $\Delta$-point. 
To this end, assume to the contrary that $m_{0,1}$ is a $\Delta$-point. 
For $\alpha \in (0,\frac{1}{2})$, let us consider the map $f$ defined as $f(x_t) = 0$ for every $0 \leq t < \alpha$ and $f(x_t) = 1 -\alpha$ for every $1-\alpha < t \leq 1$. Observe that the the slope of $f$ is $1$. Now, extend $f$ by McShane to the Lipschitz map on $M$ and denote the extension by $\tilde{f}$. 
Notice that $m_{0,1} \in S(f, 2\alpha)$ and there exists a sequence $(u_n, v_n)$ of points in $M$ with $m_{u_n, v_n} \in S(f, 2\alpha)$ such that $0 < d(u_n, v_n) \rightarrow 0$. By the shape of the metric space $M$, we have that both $u_n$ and $v_n$ converge to $0$ or $1$. However, in either case, we have that $f(m_{u_n, v_n}) \rightarrow 0$, which is a contradiction. 
\end{example} 

In order to obtain a kind of converse of Proposition \ref{prop:connectdelta}, our strategy will be to work with complete metric spaces $M$ included in Banach spaces (so every pair of points $x,y\in M$ can be joined by geodesics in $X$) and then assume that the $diam(B(x,r+\varepsilon)\cap B(y,d(x,y)-r+\varepsilon))$ tends to $0$ when $\varepsilon$ tends to $0$, in order to guarantee that $M$ contains curves which are close to the geodesic which does exist in $X$. The first result in this line will require compactness on $M$ but a very natural condition on $X$. 

\begin{theorem}\label{theo:equideltacomconv}
Let $X$ be a strictly convex Banach space and $M$ be a compact subset of $X$. The following assertions are equivalent:
\begin{enumerate}
\item $m_{x,y}$ is a $\Delta$-point.
\item For every $0<r< \|x - y\| $ we get that
$$S(x,r)\cap S(y, \|x - y\|-r)\neq \emptyset.$$
\item $[x,y]\subseteq M$. In particular, the points $x$ and $y$ are connectable by an isometric curve.
\end{enumerate}
\end{theorem}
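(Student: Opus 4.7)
My plan is a cyclic proof $(1)\Rightarrow(2)\Rightarrow(3)\Rightarrow(1)$. The implication $(1)\Rightarrow(2)$ is a direct upgrade of Corollary \ref{coro:interballs} using compactness, and $(3)\Rightarrow(1)$ is an almost tautological application of Proposition \ref{prop:connectdelta}; the substantial content lies in $(2)\Rightarrow(3)$, where the strict convexity of $X$ does the decisive work.

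For $(1)\Rightarrow(2)$, I would fix $r\in(0,\|x-y\|)$ and, for each $n\in\mathbb{N}$, invoke Corollary \ref{coro:interballs} to choose $z_n\in M$ with $\|x-z_n\|\leq r+\tfrac{1}{n}$ and $\|y-z_n\|\leq \|x-y\|-r+\tfrac{1}{n}$. Compactness of $M$ then yields a convergent subsequence $z_{n_k}\to z\in M$, and the triangle inequality $\|x-y\|\leq \|x-z\|+\|z-y\|$, combined with the two limiting bounds, forces equality in both, giving $z\in S(x,r)\cap S(y,\|x-y\|-r)$.

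The heart of the argument is $(2)\Rightarrow(3)$. The fact I would exploit is that in a strictly convex Banach space the metric segment coincides with the affine segment: if $z\in X$ satisfies $\|x-z\|+\|z-y\|=\|x-y\|$, then setting $u=x-z$ and $v=z-y$ yields $\|u\|+\|v\|=\|u+v\|$, whence by strict convexity $u$ and $v$ are non-negative scalar multiples of each other, and a short computation places $z$ on the affine segment $\{(1-t)x+ty:t\in[0,1]\}$. Given hypothesis (2), for each $r\in(0,\|x-y\|)$ I would pick $z_r\in M\cap S(x,r)\cap S(y,\|x-y\|-r)$; such a $z_r$ lies in the metric segment, hence equals $x+\tfrac{r}{\|x-y\|}(y-x)$. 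As $r$ sweeps through $(0,\|x-y\|)$, the points $z_r$ fill the entire open affine segment, and together with $x,y\in M$ this yields $[x,y]\subseteq M$. The parametrization $\alpha(s):=y+\tfrac{s}{\|x-y\|}(x-y)$ for $s\in[0,\|x-y\|]$ is then the required isometric curve in $M$ from $y$ to $x$.

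Finally, for $(3)\Rightarrow(1)$, any isometric curve $\alpha:[0,\|x-y\|]\to M$ extends (by setting it constantly equal to $x$ on the tail) to a $1$-Lipschitz curve on $[0,\|x-y\|+\varepsilon]$ for every $\varepsilon>0$, so $x$ and $y$ are connectable in the sense of Definition \ref{defi:connect} and Proposition \ref{prop:connectdelta} gives that $m_{x,y}$ is a $\Delta$-point. The only real subtlety I foresee is conceptual rather than technical: one has to recognize that strict convexity is precisely the hypothesis needed to promote the ``spheres meet'' conclusion of Corollary \ref{coro:interballs} into a bona fide isometric curve inside $M$, since without uniqueness of geodesics in the ambient $X$, the points $z_r$ produced by (2) could be scattered and fail to assemble into a single connecting path.
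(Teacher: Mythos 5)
Your proof is correct and follows essentially the same route as the paper: the cycle $(1)\Rightarrow(2)\Rightarrow(3)\Rightarrow(1)$, with $(1)\Rightarrow(2)$ obtained from Corollary \ref{coro:interballs} plus compactness, $(3)\Rightarrow(1)$ from Proposition \ref{prop:connectdelta}, and strict convexity carrying the decisive step $(2)\Rightarrow(3)$. The only cosmetic difference is that you pin down the point of $S_X(x,r)\cap S_X(y,\|x-y\|-r)$ via the equality case of the triangle inequality, whereas the paper shows this intersection is a singleton by a midpoint argument; these are equivalent uses of strict convexity.
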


\begin{proof}
(3)$\Rightarrow$(1) follows from Proposition \ref{prop:connectdelta} and (1)$\Rightarrow$(2) follows form Proposition \ref{coro:interballs}. So it only remains to prove that (2) implies (3). To this end, pick $0<r<\|x - y\|$ and define $A:=S(x,r)\cap S(y,\|x - y\| -r)$.  
Let us prove that $S_X(x,r)\cap S_X(y,\|x - y\| -r)$ only contains one point, which is precisely $\big( \frac{r}{\|x - y\|} \big) x+\big(1-\frac{r}{\|x - y\|}\big)y$. Once this is proved, since $A\neq\emptyset$ and $A\subseteq S_X(x,r)\cap S_X (y,\|x - y\|-r)$, we obtain that $\big( \frac{r}{\|x - y\|} \big) x+ \big(1- \frac{r}{\|x - y\|}\big)y\in A$, so it is in $M$.

So, let us now go to prove that $S_X(x,r)\cap S_X(y,\|x - y\|-r)$ only contains one point. To this end, pick $u,v\in S_X(x,r)\cap S_X(y,\|x - y\|-r)$ and let us consider the midpoint  $z=\frac{u+v}{2}$. Notice that $z\in B_X (x,r)\cap B_X (y,\|x - y\|-r)$. Moreover, it follows that
$$\|x - z \| \geq \| x- y \| - \| y - z \| \geq \|x - y \| -(\|x - y \| -r)=r.$$
This implies that $z=\frac{u+v}{2}\in S_X(x,r)$. Since $X$ is strictly convex, we get that $u=v$. 
\end{proof}

Let us end the section by obtaining a version of Theorem \ref{theo:equideltacomconv} which, assuming a stronger condition on $X$, will allow us to remove compactness assumption on $M$. Let us consider the following definition.

\begin{definition}
A Banach space $X$ is said to be \emph{midpoint locally uniformly rotund (in short, MLUR)} if whenever $(x_n)$ and $(y_n)$ are sequences in $S_X$ and satisfy that $\frac{1}{2} (x_n + y_n)$ converges to some element in $S_X$, then $\|x_n - y_n \| \rightarrow 0$. 
\end{definition} 

It is not difficult to check that $X$ is MLUR if and only if whenever $(x_n)$ and $(y_n)$ are sequences in $X$ such that $\|x_n\|$ and $\| y_n \|$ tend to $1$ and $\frac{1}{2} (x_n + y_n)$ converges to some member of $S_X$, it follows that $\|x_n - y_n \| \rightarrow 0$. Recall that a locally uniformly convex Banach space is MLUR and a MLUR Banach space is strictly convex. For more theory about rotundity in Banach spaces, see \cite{megginson}.  

Let us start with the following preliminary lemma.

\begin{lemma}\label{lem:char_MLUR} 
Let $X$ be a Banach space. Then $X$ is MLUR if and only if whenever $(x_n)$ and $(y_n)$ are sequences in $X$ such that $\|x_n\|$ and $\| y_n \|$ tend to $1$ and $ r x_n +(1-r) y_n$ converges to some member of $S_X$ for some $0 < r < 1$, it follows that $\|x_n - y_n \| \rightarrow 0$.
\end{lemma}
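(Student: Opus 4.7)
The ``if'' direction is just the special case $r = 1/2$, so the substance of the lemma lies in the converse implication. For this, suppose $X$ is MLUR and let $(x_n), (y_n)$ be sequences with $\|x_n\|, \|y_n\| \to 1$ and $rx_n + (1-r)y_n \to z$ for some $z \in S_X$ and some fixed $r \in (0,1)$. The goal is to deduce $\|x_n - y_n\| \to 0$.

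My plan is first to reduce to the case $r \leq 1/2$ by symmetry: interchanging $(x_n)$ and $(y_n)$ replaces $r$ by $1-r$ while preserving the quantity $\|x_n - y_n\|$. The case $r = 1/2$ is precisely the alternative characterization of MLUR recorded immediately before the lemma, so I may further assume $0 < r < 1/2$. The main idea is then to manufacture a genuine midpoint expression: set
\[
u_n := 2r\, x_n + (1-2r)\, y_n, \qquad v_n := y_n,
\]
so that $\tfrac12(u_n + v_n) = r x_n + (1-r) y_n \to z \in S_X$. To invoke MLUR I need $\|u_n\|, \|v_n\| \to 1$; the upper bound $\limsup_n \|u_n\| \leq 1$ follows from the triangle inequality (using that the coefficients $2r$ and $1-2r$ are both in $(0,1)$ and sum to $1$), and the matching lower bound is obtained by writing $u_n = 2\bigl(r x_n + (1-r) y_n\bigr) - y_n$ and applying the reverse triangle inequality, since the first summand tends to an element of $S_X$ and $\|y_n\| \to 1$.

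With the midpoint configuration in hand, the MLUR property (in the equivalent form stated before the lemma) yields $\|u_n - v_n\| \to 0$. Since $u_n - v_n = 2r(x_n - y_n)$ and $r > 0$ is a fixed constant, this gives $\|x_n - y_n\| \to 0$ as required. I do not foresee any genuine obstacle; the only subtle point is that the midpoint construction requires the strict condition $0 < 2r < 1$, which is exactly why the symmetry reduction to $r \leq 1/2$ together with the separate (trivial) handling of $r = 1/2$ is needed.
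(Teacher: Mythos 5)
Your proof is correct and follows essentially the same strategy as the paper's: write $r x_n + (1-r) y_n$ as the midpoint of two auxiliary sequences lying on the segment through $x_n$ and $y_n$, verify that their norms tend to $1$ (upper bound by convexity, lower bound by a reverse-triangle/sandwich argument), and then apply the midpoint form of MLUR. The only difference is cosmetic: the paper takes both auxiliary points to be proper convex combinations $\alpha x_n+(1-\alpha)y_n$ and $\beta x_n+(1-\beta)y_n$ with $\alpha+\beta=2r$, $\alpha\neq\beta$, which handles every $r\in(0,1)$ at once and so avoids your preliminary symmetry reduction to $r\leq 1/2$.
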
 

\begin{proof}
The ``if'' part is clear taking $r=\frac{1}{2}$, so let us prove the ``only if'' part. Take $(x_n), (y_n)$ and $r$ as in the statement of the lemma. Let $\alpha \neq \beta \in (0,1)$ be so that $\alpha + \beta = 2r$ and define $z_n := \alpha x_n + (1-\alpha) y_n$ and $w_n := \beta x_n + (1-\beta) y_n$. Then 
\[
\|z_n \| \leq \alpha \|x_n\| + (1-\alpha) \|y_n\| \rightarrow 1 \quad \text{and} \quad \| w_n\| \leq \beta \|x_n\| + (1-\beta) \|y_n\| \rightarrow 1.
\]
On the other hand,
\[
\| r x_n + (1-r) y_n \| \leq \frac{1}{2} ( \|z_n\| + \|w_n\| ) \leq r \|x_n\| + (1- r) \|y_n \| \rightarrow 1.
\]
Since the left side term converges to $1$ by assumptions we get that, passing to a subsequence if necessary, $\|z_n\|$ and $\|w_n\|$ tend to $1$. As $\frac{1}{2} (z_n + w_n) = r x_n + (1-r) y_n$ converges to some element of $S_X$, we conclude that $\|z_n - w_n \| \rightarrow 0$ as $n \rightarrow \infty$. So, $\|x_n -y_n \| = |(\alpha - \beta)^{-1}| \|z_n - w_n \| \rightarrow 0$. 
\end{proof} 

The previous lemma is the key to obtain the following result.

\begin{lemma}\label{lem:MLUR_diameter}
Let $X$ be a MLUR Banach space, and $x \neq y \in X$. Then for $0 < r < d(x,y)$, 
\[
\diam{\left(B_X \left(x, r+ \frac{1}{n}\right) \cap B_X \left(y, \|x-y\| -r+\frac{1}{n}\right)\right)} \rightarrow 0
\]
as $n \rightarrow \infty$. 
\end{lemma}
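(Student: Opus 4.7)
The plan is to argue by contradiction, writing $L := \|x-y\|$. If the diameter does not tend to $0$, then after passing to a subsequence there exist $\varepsilon_0 > 0$ and points $u_n, v_n \in B_X(x, r+\tfrac{1}{n}) \cap B_X(y, L-r+\tfrac{1}{n})$ with $\|u_n - v_n\| \geq \varepsilon_0$. Combining the given upper bounds on $\|u_n - x\|$ and $\|u_n - y\|$ with the triangle inequality $\|u_n - x\| + \|u_n - y\| \geq L$ forces $\|u_n - x\| \to r$ and $\|u_n - y\| \to L - r$, and likewise for $v_n$; in particular these quantities are eventually strictly positive, so the unit vectors $e_n := (u_n - x)/\|u_n - x\|$ and $h_n := (y - u_n)/\|y - u_n\|$, together with their analogues $e_n', h_n'$ built from $v_n$, are well defined.

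The key identity $(u_n - x) + (y - u_n) = y - x$, divided through by $L$, expresses the unit vector $f := (y-x)/L$ as a near-convex combination of $e_n$ and $h_n$ whose coefficients $\|u_n - x\|/L$ and $\|y - u_n\|/L$ tend to $\lambda := r/L$ and $1 - \lambda$ respectively. A short $o(1)$ estimate, using only $\|e_n\| = \|h_n\| = 1$, then upgrades this to
\[
\lambda e_n + (1-\lambda) h_n \longrightarrow f \in S_X,
\]
and applying Lemma \ref{lem:char_MLUR} to the sequences $(e_n), (h_n)$ with the fixed coefficient $\lambda \in (0,1)$ yields $\|e_n - h_n\| \to 0$. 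The identical argument gives $\lambda e_n' + (1-\lambda) h_n' \to f$ and $\|e_n' - h_n'\| \to 0$.

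Subtracting the two convergence statements gives $\lambda(e_n - e_n') + (1-\lambda)(h_n - h_n') \to 0$, and substituting $h_n = e_n + o(1)$ and $h_n' = e_n' + o(1)$ collapses this to $e_n - e_n' \to 0$. Since $u_n - v_n = \|u_n - x\|\, e_n - \|v_n - x\|\, e_n'$ with both scalars tending to $r$, we conclude $u_n - v_n \to 0$, contradicting $\|u_n - v_n\| \geq \varepsilon_0$. The only delicate step is the passage from the varying weights $\|u_n - x\|/L$ to the constant $\lambda$ in order to invoke Lemma \ref{lem:char_MLUR}; this is purely bookkeeping but is precisely where the MLUR hypothesis (rather than the plain strict convexity that sufficed in the compact setting of Theorem \ref{theo:equideltacomconv}) is used.
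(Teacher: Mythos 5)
Your proof is correct and uses the same key mechanism as the paper: normalize the two ``legs'' from $x$ and from $y$ to a point of the intersection, observe that a convex combination with weights tending to $r/\|x-y\|$ and $1-r/\|x-y\|$ recovers the unit vector in the direction of $y-x$, and invoke Lemma \ref{lem:char_MLUR}. The only packaging difference is that the paper runs this once for a single sequence and identifies the limit explicitly as the point $\bigl(1-\frac{r}{\|x-y\|}\bigr)x+\frac{r}{\|x-y\|}y$ on the segment (which immediately gives diameter $\to 0$, and is reused in Theorem \ref{theo:MLUR_equideltacomconv}), whereas you run it for two sequences and conclude by contradiction.
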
 

\begin{proof}
By scaling and translating, we may assume that $x \in S_X$ and $y = -x$. Given $0 < r < 2$, let $x_n \in B_X \left(x, r+ \frac{1}{n}\right) \cap B_X \left(-x, 2-r+\frac{1}{n}\right)$ for each $n \in \N$. In other words, $\|x_n - x \| \leq r + \frac{1}{n}$ and $\| x_n + x \| \leq 2- r + \frac{1}{n}$ for every $n \in \N$. Then 
\[
2 \leq \| x_n - x \| + \|x_n + x \| \leq 2 + \frac{2}{n} \rightarrow 2;
\]
hence, passing to a subsequence if necessary, we have that $\|x_n - x \| \rightarrow r$ and $\|x_n + x \| \rightarrow 2-r$ as $n \rightarrow \infty$. Let us consider $z_n := (r+\frac{1}{n})^{-1} (x-x_n)$ and $w_n = (2-r+\frac{1}{n})^{-1} (x+x_n)$ in $B_X$. Then $\|z_n\| \rightarrow 1$, $\|w_n \| \rightarrow 1$ and $\frac{r}{2}z_n + \left( 1 -\frac{r}{2} \right) w_n \rightarrow x \in S_X$.
By Lemma \ref{lem:char_MLUR}, we conclude that $\|z_n - w_n \| \rightarrow 0$ as $n \rightarrow \infty$. This implies that $x_n \rightarrow (1-r) x$ as $n \rightarrow \infty$. 
\end{proof} 

Now we are ready to get the desired result.

\begin{theorem}\label{theo:MLUR_equideltacomconv}
Let $X$ be a MLUR Banach space and $M$ be a complete subset of $X$. For $x \neq y \in M$, the following assertions are equivalent:
\begin{enumerate}
\item $m_{x,y}$ is a $\Delta$-point.
\item For every $0<r<\|x-y\|$ we get that
$$B(x,r+\varepsilon)\cap B(y, \|x-y\|-r+\varepsilon)\neq \emptyset.$$
\item $[x,y]\subseteq M$. In particular, the points $x$ and $y$ are connectable by an isometric curve.
\end{enumerate}
\end{theorem}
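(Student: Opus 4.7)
The plan is to follow the three-step cycle of the previous theorem, substituting Lemma \ref{lem:MLUR_diameter} for the compactness argument used in Theorem \ref{theo:equideltacomconv}. The implications (3)$\Rightarrow$(1) and (1)$\Rightarrow$(2) come essentially for free: if $[x,y]\subseteq M$, then the affine map $\alpha\colon[0,\|x-y\|]\to M$ defined by $\alpha(t):=y+\frac{t}{\|x-y\|}(x-y)$ is an isometric parameterisation, which witnesses that $x$ and $y$ are connectable, so Proposition \ref{prop:connectdelta} applies; and Corollary \ref{coro:interballs} gives (1)$\Rightarrow$(2) directly, since that corollary was already proved for any complete metric space.

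The real work is (2)$\Rightarrow$(3). First I would fix $0<r<\|x-y\|$ and use hypothesis (2) with $\varepsilon=1/n$ to choose, for each $n\in\N$, a point $z_n\in M$ with $\|z_n-x\|\leq r+\frac{1}{n}$ and $\|z_n-y\|\leq\|x-y\|-r+\frac{1}{n}$. The crucial input is Lemma \ref{lem:MLUR_diameter}: since $X$ is MLUR, the diameters of the nested intersections $B_X(x,r+\frac{1}{n})\cap B_X(y,\|x-y\|-r+\frac{1}{n})$ tend to $0$. Hence, for $m,n\geq N$, both $z_m$ and $z_n$ lie in the small-diameter set with index $N$, so $(z_n)$ is Cauchy in $X$, and therefore in $M$; completeness of $M$ then yields a limit $z\in M$.

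Passing to the limit in the two inequalities gives $\|z-x\|\leq r$ and $\|z-y\|\leq\|x-y\|-r$, which together with the triangle inequality forces both to be equalities. Since MLUR implies strict convexity, $z$ must coincide with the unique convex combination $(1-r/\|x-y\|)x+(r/\|x-y\|)y$ on the affine segment between $x$ and $y$, and this point therefore lies in $M$. Letting $r$ range over $(0,\|x-y\|)$ delivers $[x,y]\subseteq M$, completing the proof. I expect the main obstacle to be the clean setup of the Cauchy-plus-completion argument via Lemma \ref{lem:MLUR_diameter}; once that is in place, identifying the limit through strict convexity is a short triangle-inequality argument, and compactness of $M$ never enters, which is precisely what the MLUR strengthening of strict convexity is designed to afford.
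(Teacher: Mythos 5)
Your proposal is correct and follows essentially the same route as the paper: (3)$\Rightarrow$(1) via Proposition \ref{prop:connectdelta}, (1)$\Rightarrow$(2) via Corollary \ref{coro:interballs}, and (2)$\Rightarrow$(3) by taking points in the shrinking intersections, invoking Lemma \ref{lem:MLUR_diameter} to get a convergent (Cauchy) sequence with limit in the complete set $M$, and identifying the limit on the segment via strict convexity. The only difference is that you spell out the nestedness/Cauchy step explicitly, which the paper leaves implicit.
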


\begin{proof}
(3)$\Rightarrow$(1) follows from Proposition \ref{prop:connectdelta} and (1)$\Rightarrow$(2) follows form Proposition \ref{coro:interballs}. So it only remains to prove that (2) implies (3). To this end, let $x_n \in B(x, r+ \frac{1}{n}) \cap B(y, \|x-y\|-r+\frac{1}{n})$ for each $n \in \N$. Note from Lemma \ref{lem:MLUR_diameter} that  
\[
\diam{\left(B\left(x, r+ \frac{1}{n}\right) \cap B\left(y, \|x-y\|-r+\frac{1}{n}\right)\right)} \rightarrow 0
\]
as $n \rightarrow \infty$. This implies that $x_n$ converges to some $x_0$ in $M$ (noting that $M$ is complete). Moreover, we have that $\|x_0 - x\| = r$ and $\| x_0 - y \| = \|x - y\| - r$. The strictly convexity of $X$ forces $x_0 = \big(1-\frac{r}{\|x-y\|} \big)x + \big(\frac{r}{\|x-y\|}\big) y$. This proves that the segment $[x,y]$ is contained in $M$. 
\end{proof}

\textbf{Acknowledgements:} 
The authors are deeply grateful to Luis Garc\'ia-Lirola for letting them to include his Example \ref{example:luisca} in the present paper and for further fruitful conversations on the topic of the paper. 
They also thank Sheldon Dantas for fruitful discussions and valuable comments and thank Vegard Lima for pointing out a mistake in a previous version of this paper.

\end{document}